\theoremstyle{plain}
\newtheorem{thm}{Theorem}[section]
\newtheorem*{thm*}{Theorem}
\newtheorem{prop}[thm]{Proposition}
\newtheorem{lem}[thm]{Lemma}
\newtheorem{cor}[thm]{Corollary}
\theoremstyle{definition}
\theoremstyle{remark}
\newtheorem{rem}[thm]{Remark}
\newcommand{\Ric}{\operatorname{Ric}}
\renewcommand{\sec}{\operatorname{sec}}
\newcommand{\Hess}{\operatorname{Hess}}
\newcommand{\bm}{\partial M}
\newcommand{\ABP}{\operatorname{ABP}}
\newcommand{\CD}{\operatorname{CD}}
\newcommand{\RCD}{\operatorname{RCD}}
\newcommand{\CAT}{\operatorname{CAT}}
\newcommand{\MCP}{\operatorname{MCP}}
\newcommand{\eps}{\varepsilon}
\newcommand{\e}{\mathrm{e}}
\renewcommand{\d}{\mathrm{d}}
\def\qed{\hfill $\Box$} 
\title[Lower $0$-weighted Ricci curvature bound]{Geometric analysis on weighted manifolds\\ under lower $0$-weighted Ricci curvature bounds}
\author{Yasuaki Fujitani}
\address{Graduate School of Mathematical Sciences, The University of Tokyo, 3-8-1 Komaba, Tokyo, 153-8914, Japan}
\email{yasuakifujitani@g.ecc.u-tokyo.ac.jp}
\author{Yohei Sakurai}
\address{Department of Mathematics, Saitama University, 255 Shimo-Okubo, Sakura-ku, Saitama-City, Saitama, 338-8570, Japan}
\email{ysakurai@rimath.saitama-u.ac.jp}
\subjclass[2020]{Primary 53C21; Secondary 53C20}
\keywords{Weighted Ricci curvature; Eigenvalue estimate; ABP estimate; Sobolev inequality}
\date{\today}
\begin{document}
\maketitle

\begin{abstract}
We develop geometric analysis on weighted Riemannian manifolds under lower $0$-weighted Ricci curvature bounds.
Under such curvature bounds,
we prove a first non-zero Steklov eigenvalue estimate of Wang-Xia type on compact weighted manifolds with boundary,
and a first non-zero eigenvalue estimate of Choi-Wang type on closed weighted minimal hypersurfaces.
We also produce an ABP estimate and a Sobolev inequality of Brendle type.
\end{abstract}

\section{Introduction}

For $n\geq 2$,
let $(M,g,f)$ be an $n$-dimensional weighted Riemannian manifold,
namely,
$(M,g)$ is an $n$-dimensional complete Riemannian manifold, and $f\in C^{\infty}(M)$.
For $N\in (-\infty,+\infty]$,
the \textit{$N$-weighted Ricci curvature} is defined as follows (\cite{BE}, \cite{L}): 
\begin{equation*}
    \Ric_f^N:=\Ric + \Hess f-\frac{\d f\otimes \d f}{N-n}.
\end{equation*}
Here when $N = +\infty$, 
we interpret the last term of the right hand side as the limit $0$, 
and when $N = n$, 
we only consider a constant function $f$ and set $\Ric_f^n := \Ric$.
In this article,
we will focus on the case of $N=0$.
We study geometric and spectral properties of weighted manifolds under each of the following curvature bounds:
\begin{align}\label{eq:CD0}
    \Ric^0_f &\geq K\,g \text{~for $K\in \mathbb{R}$};\\ \label{eq:KL0}
    \Ric_f^0& \geq n\,\kappa\, \e^{-\frac{4f}{n}}g \text{~for $\kappa \in \mathbb{R}$}.
\end{align}

\subsection{Background on curvature conditions}

In recent years,
various curvature conditions have been introduced in the theory of weighted Riemannian manifolds.
It is well-known that
the classical curvature bound
\begin{equation}\label{eq:CD}
\Ric^{N}_{f} \geq K\,g
\end{equation}
for $N\in [n,+\infty]$ can be characterized by the \textit{curvature-dimension condition} $\CD(K,N)$ in the sense of Sturm \cite{S1}, \cite{S2} and Lott-Villani \cite{LV}.
Nowadays,
geometric and analytic properties under \eqref{eq:CD} are well-understood beyond the smooth setting due to the development of the theory of non-smooth metric measure spaces satisfying $\CD(K,N)$ or $\RCD(K,N)$ in the sense of Ambrosio-Gigli-Savar\'{e} \cite{AGS}.

The validity of the bound \eqref{eq:CD} for $N\in (-\infty,n)$ has been observed by Ohta \cite{O1}, Klartag \cite{K}, Kolesnikov-Milman \cite{KM1}, \cite{KM2}, Milman \cite{M}, Wylie \cite{W1} in view of the curvature-dimension condition, disintegration theory, functional inequalities, splitting theorems and so on.
By the following monotonicity of $\Ric^{N}_f$ with respect to $N$,
the bound \eqref{eq:CD} for $N\in (-\infty,n)$ is weaker than that for $N\in [n,+\infty]$:
For $N_1,N_2 \in [n,+\infty]$ with $N_1\leq N_2$,
and for $N_3,N_4 \in (-\infty,n)$ with $N_3\leq N_4$,
\begin{equation}\label{eq:monotonicity}
\Ric_f^{N_1}\leq  \Ric_f^{N_2} \leq \Ric_f^{\infty} \leq  \Ric_f^{N_3} \leq \Ric_f^{N_4}.
\end{equation}
Ohta \cite{O1}, \cite{O2} has extended the notion of $\CD(K,N)$ to $N\in (-\infty,0)$ in \cite{O1}, and to $N=0$ in \cite{O2}.
Recently, Magnabosco-Rigoni-Sosa \cite{MRS} and Oshima \cite{O} have investigated the stability of $\CD(K,N)$ for $N\in (-\infty,0)$,
and Magnabosco-Rigoni \cite{MR} have done optimal transport and local-to-global properties under $\CD(K,N)$ for $N\in (-\infty,0)$ in the non-smooth framework.

Wylie \cite{W1} has proven a splitting theorem of Cheeger-Gromoll type under \eqref{eq:CD} for $K=0$ and $N=1$,
where the bound \eqref{eq:CD} for $N=1$ is further weaker than that for $N\in (-\infty,0]$ by the monotonicity \eqref{eq:monotonicity}.
After that
Wylie-Yeroshkin \cite{WY} have introduced a variable bound 
\begin{equation}\label{eq:WY}
 \Ric^{1}_{f}\geq (n-1)\, \kappa\, \e^{-\frac{4f}{n-1}}\,g
\end{equation}
in view of the study of affine connections.
Notice that
this variable bound can be interpreted as a condition that the Ricci tensor for a certain affine connection is bounded from below by a constant $(n-1)\kappa$ along its geodesic,
where the connection is torsion-free and projectively equivalent to the Levi-Civita connection (more precisely, see \cite[Subsection 2.1]{WY}).
They have obtained diameter and volume comparisons,
and rigidity theorems by setting model spaces as warped or twisted product spaces.
Kuwae-Li \cite{KL} have provided a generalized bound
\begin{equation}\label{eq:KL}
\Ric_f^N\geq (n-N)\,\kappa\, \e^{-\frac{4f}{n-N}}g
\end{equation}
for $N\in (-\infty,1]$,
and also generalized the comparison geometry.
Furthermore,
Lu-Minguzzi-Ohta \cite{LMO} have introduced a curvature bound with \textit{$\eps$-range} which interpolates \eqref{eq:CD} for $K=(N-1)\kappa$, \eqref{eq:WY} and \eqref{eq:KL},
and examined geometric and analytic properties in a unified way.

\subsection{Main theorems}

The aim of this paper is to develop geometric analysis on weighted manifolds under lower $0$-weighted Ricci curvature bounds;
especially,
each of the curvature bounds \eqref{eq:CD0} and \eqref{eq:KL0}.
They are nothing but the bounds \eqref{eq:CD} and \eqref{eq:KL} for $N=0$,
respectively.
We derive eigenvalue estimates and functional inequalities concerning the \textit{weighted Laplacian} and \textit{weighted measure}
\begin{equation*}
\Delta_f := \Delta - \langle \nabla f, \nabla \cdot \rangle,\quad m_f := \e^{-f} v_g.
\end{equation*}
Our main theorems can be summarized as follows:
\begin{enumerate}\setlength{\itemsep}{4pt}
\item Wang-Xia type first non-zero Steklov eigenvalue estimate on compact weighted manifolds with boundary under \eqref{eq:CD0} for $K=0$ (see Theorem \ref{thm:Wang-Xia});
\item Choi-Wang type first non-zero eigenvalue estimate on closed weighted minimal hypersurfaces in weighted manifolds under \eqref{eq:CD0} for $K>0$ (see Theorem \ref{thm:Choi-Wang});
\item Alexandroff-Bakelman-Pucci type estimate (ABP estimate for short) on weighted manifolds under \eqref{eq:KL0} (see Theorem \ref{thm:ABP});
\item Brendle type Sobolev inequality under \eqref{eq:KL0} for $\kappa=0$ (see Theorem \ref{thm:Sobolev}).
\end{enumerate}

We first study the following Steklov eigenvalue problem on compact weighted manifolds with boundary:
\begin{align}\label{eq:steklov-boundary-problem}
    \begin{cases}
        {\Delta}_f u=0 & \text { on } M, \\ 
        u_\nu  = \lambda\, u & \text { on } \partial M,
    \end{cases}
\end{align}
where $\nu$ is the unit outer normal vector field on the boundary,
and $u_\nu$ is the derivative of $u$ in the direction of $\nu$.
In the unweighted case,
Wang-Xia \cite{WX} have obtained a first non-zero Steklov eigenvalue estimate on manifolds of non-negative Ricci curvature based on a first non-zero eigenvalue estimate of the weighted Laplacian on the boundary by Xia \cite{X} (see \cite[Theorem 1.1]{WX}, \cite[Theorem 1]{X}).
Kolesnikov-Milman \cite{KM2} have shown a Xia type estimate on weighted manifolds under \eqref{eq:CD0} for $K=0$ (see \cite[Theorem 1.3]{KM2}).
Also,
Batista-Santos \cite{BS} have proved a Wang-Xia type estimate under \eqref{eq:CD} for $K=0$ and $N\in [n,+\infty]$ (see \cite[Theorems 1.1 and 1.2]{BS}).
Based on \cite{KM2},
we extend the estimate in \cite{BS} to a weak setting \eqref{eq:CD0} for $K=0$.

We next examine the eigenvalue problem on closed weighted minimal hypersurfaces in weighted manifolds.
Here,
a weighted minimal hypersurface is a critical point of the weighted volume functional,
which is a crucial object in the study of self-similar solutions to the mean curvature flow.
In the unweighted case,
Choi-Wang \cite{CW} have obtained a lower bound of the first non-zero eigenvalue on closed minimal hypersurfaces in manifolds of positive Ricci curvature (see \cite[Theorem 2]{CW}, and also \cite{CS}).
Cheng-Mejia-Zhou \cite{CMZ}, Li-Wei \cite{LW}, Ma-Du \cite{MD} have derived a Choi-Wang type estimate on closed weighted minimal hypersurfaces in weighted manifolds under \eqref{eq:CD} for $K>0$ and $N=+\infty$ (see \cite[Theorem 2]{CMZ}, \cite[Theorem 7]{LW}, \cite[Theorem 3]{MD}). 
Note that
the topological constraint for ambient spaces is most relaxed in \cite{CMZ}.
We extend their estimate to a weak setting \eqref{eq:CD0} for $K>0$.

We further investigate the ABP estimate.
The ABP estimate plays a key role in the proof of the Krylov-Safanov Harnack inequality and the regularity theory for fully non-linear elliptic equations.
Cabr\'{e} \cite{C} has formulated an ABP estimate on Riemannian manifolds,
and Wang-Zhang \cite{WZ} have obtained such an estimate under a lower Ricci curvature bound (see \cite[Theorem 1.2]{WZ}).
It has been generalized for weighted manifolds under \eqref{eq:CD} with $N \in [n,+\infty]$ (see \cite[Proposition 3.2]{WZ}, \cite[Theorem 1.2]{WZ2}, \cite[Theorem 4.1]{MS}, and also \cite{H} in the non-smooth framework).
We produce an ABP estimate under a weak setting \eqref{eq:KL0}.

We finally consider the Sobolev inequality.
In the unweighted case,
Brendle \cite{B} has established a sharp Sobolev inequality on manifolds of non-negative Ricci curvature (see \cite[Theorem 1.1]{B}). 
Johne \cite{J} has obtained a Brendle type Sobolev inequality on weighted manifolds under \eqref{eq:CD} for $K=0$ and $N\in [n,+\infty)$ (see \cite[Theorem 1.1]{J}).
We will extend his estimate to a weak setting \eqref{eq:KL0} for $\kappa=0$.

The proof of eigenvalue estimates under \eqref{eq:CD0} is based on Bochner and Reilly type formulas associated with the $0$-weighted Ricci curvature.
On the other hand,
that of functional inequalities under \eqref{eq:KL0} is based on Riccati type inequalities.
Recall that
the curvature bound \eqref{eq:WY} concerning the $1$-weighted Ricci curvature was compatible with comparison geometry since the main study object was the distance function, whose Hessian degenerates in the direction of its gradient (cf. \cite[Lemma 3.1]{W2}).
Our main results can be available under lower $0$-weighted Ricci curvature bounds from the viewpoint that we analyze arbitrary functions which do not necessarily satisfy such a degenerate property.

\subsection{Organization}
In Section \ref{sec:Steklov eigenvalue estimate},
we provide a Reilly type formula associated with the $0$-weighted Ricci curvature (Proposition \ref{thm:0-reilly}),
improve the Xia type estimate in \cite{KM2} under the non-negativity of the $1$-weighted Ricci curvature (Theorem \ref{thm:KM}),
and conclude a Wang-Xia type first non-zero Steklov eigenvalue estimate (Theorem \ref{thm:Wang-Xia}).
In Section \ref{sec:Choi-Wang},
we investigate the Frankel property for weighted minimal hypersurfaces under the positivity of the $1$-weighted Ricci curvature (Proposition \ref{thm:1-Frankel-strong}),
and prove a Choi-Wang type first non-zero eigenvalue estimate (Theorem \ref{thm:Choi-Wang}).
In Section \ref{sec:ABP},
we establish an ABP estimate (Theorem \ref{thm:ABP}).
As applications of the ABP estimate,
we present a Krylov-Safonov type Harnack inequality (Theorem \ref{thm:KS}),
and a Liouville theorem for weighted harmonic functions (Corollary \ref{cor:Liouville}).
In Section \ref{sec:Sobolev},
we produce a Brendle type Sobolev inequality (Theorem \ref{thm:Sobolev}),
and derive an isoperimetric inequality (Corollary \ref{cor:isoperimetric}).

\section{Steklov eigenvalue estimate}\label{sec:Steklov eigenvalue estimate}
In this section,
we study the Steklov eigenvalue problem \eqref{eq:steklov-boundary-problem} on a compact weighted Riemannian manifold with boundary $(M,g,f)$.
We denote by $\lambda_{1,M}^{\mathrm{Ste}}$ the first non-zero Steklov eigenvalue for \eqref{eq:steklov-boundary-problem}.
On $\bm$,
let $\nu$ stand for the unit outer normal vector field.
The second fundamental form $\mathrm{II}_{\partial M}$,
the mean curvature $H_{\partial M}$ and the weighted mean curvature $H_{f,\partial M}$ are defined as 
\begin{equation}\label{def:weighted-mean-curvature}
    \mathrm{II}_{\partial M}(X,Y) := \langle \nabla_X \nu,Y \rangle, \quad H_{\partial M} := \mathrm{tr}\, \mathrm{II}_{\partial M},\quad H_{f,\partial M} := H_{\partial M} - f_\nu.
\end{equation}
Our first main result is the following Wang-Xia type first non-zero Steklov eigenvalue estimate:
\begin{thm}\label{thm:Wang-Xia}
Let $(M,g,f)$ be a compact weighted Riemannian manifold with boundary.
For $\sigma, k > 0$,
we assume $\Ric_f^0 \geq 0$, $\mathrm{II}_{\partial M} \geq \sigma \,g_{\partial M}$ and $H_{f,\partial M} \geq  k$.
Then
\begin{equation*}
\lambda_{1,M}^{\mathrm{Ste}} \leq \frac{\sqrt{\lambda_{1,\partial M}}\left(\sqrt{\lambda_{1,\partial M}}+\sqrt{\lambda_{1,\partial M}- k \sigma }\right)}{ k},
\end{equation*}
where $\lambda_{1,\partial M}$ is the first non-zero eigenvalue of the weighted Laplacian $\Delta_{f,\partial M}$ on $\bm$.
\end{thm}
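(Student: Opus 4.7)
The plan is to adapt the classical Wang--Xia argument to the $0$-weighted setting by testing the Steklov variational characterization against the $f$-harmonic extension of a first eigenfunction of $\Delta_{f,\partial M}$, and then combining the resulting Rayleigh-type inequality with the $0$-weighted Reilly formula from Proposition \ref{thm:0-reilly}. Concretely, let $\phi$ be a first eigenfunction of $\Delta_{f,\partial M}$, so that $\Delta_{f,\partial M}\phi=-\lambda_{1,\partial M}\phi$ and $\int_{\partial M}\phi\,dm_{f,\partial M}=0$, and let $u$ solve $\Delta_f u=0$ in $M$ with $u|_{\partial M}=\phi$. Weighted integration by parts gives $\int_M|\nabla u|^2\,dm_f=\int_{\partial M}\phi\,u_\nu\,dm_{f,\partial M}$, so plugging $u$ into the Steklov Rayleigh quotient (admissible since $\phi$ has zero mean on $\partial M$) and applying Cauchy--Schwarz on $\partial M$ yields
$$\lambda_{1,M}^{\mathrm{Ste}}\,\Bigl(\int_{\partial M}\phi^2\,dm_{f,\partial M}\Bigr)^{1/2}\ \leq\ \Bigl(\int_{\partial M}u_\nu^{\,2}\,dm_{f,\partial M}\Bigr)^{1/2}.$$

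Next I would apply Proposition \ref{thm:0-reilly} to $u$. Because $\Delta_f u=0$ and $\Ric_f^0\geq 0$, the interior part of the $0$-Reilly identity has the right sign, so the boundary integral must satisfy
$$\int_{\partial M}\bigl[\,2\,u_\nu\,\Delta_{f,\partial M}\phi+H_{f,\partial M}\,u_\nu^{\,2}+\mathrm{II}_{\partial M}(\nabla^T\phi,\nabla^T\phi)\,\bigr]\,dm_{f,\partial M}\ \leq\ 0.$$
Substituting $\Delta_{f,\partial M}\phi=-\lambda_{1,\partial M}\phi$, the boundary Green identity $\int_{\partial M}|\nabla^T\phi|^2\,dm_{f,\partial M}=\lambda_{1,\partial M}\int_{\partial M}\phi^2\,dm_{f,\partial M}$, and the hypotheses $H_{f,\partial M}\geq k$ and $\mathrm{II}_{\partial M}\geq\sigma\,g_{\partial M}$, this rearranges to
$$k\int_{\partial M}u_\nu^{\,2}\,dm_{f,\partial M}+\sigma\lambda_{1,\partial M}\int_{\partial M}\phi^2\,dm_{f,\partial M}\ \leq\ 2\lambda_{1,\partial M}\int_{\partial M}\phi\,u_\nu\,dm_{f,\partial M}.$$
Bounding the right-hand side by Cauchy--Schwarz and writing $X:=\bigl(\int_{\partial M}u_\nu^{\,2}\,dm_{f,\partial M}\bigr)^{1/2}$ and $Y:=\bigl(\int_{\partial M}\phi^2\,dm_{f,\partial M}\bigr)^{1/2}$, this becomes the scalar quadratic inequality $kX^2-2\lambda_{1,\partial M}\,XY+\sigma\lambda_{1,\partial M}\,Y^2\leq 0$.

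The discriminant $4\lambda_{1,\partial M}(\lambda_{1,\partial M}-k\sigma)$ is non-negative thanks to the improved Xia--Kolesnikov--Milman bound $\lambda_{1,\partial M}\geq k\sigma$ from Theorem \ref{thm:KM}, so the quadratic is solvable and
$$\frac{X}{Y}\ \leq\ \frac{\sqrt{\lambda_{1,\partial M}}\,\bigl(\sqrt{\lambda_{1,\partial M}}+\sqrt{\lambda_{1,\partial M}-k\sigma}\bigr)}{k}.$$
Combined with the first-paragraph bound $\lambda_{1,M}^{\mathrm{Ste}}\leq X/Y$, this is exactly the desired estimate. The main obstacle I anticipate is verifying that Proposition \ref{thm:0-reilly}, specialized to an $f$-harmonic $u$ under $\Ric_f^0\geq 0$, produces precisely the three-term boundary integrand used above: the $0$-weighted Bochner identity typically produces extra $\nabla f$-cross terms which must be absorbed via the Cauchy--Schwarz refinement $|\Hess u|^2\geq(\Delta u)^2/n$, and the clean expansion of $\Delta u=\Delta_f u+\nabla f\cdot\nabla u$ under $\Delta_f u=0$ is what forces the cancellation. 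With Proposition \ref{thm:0-reilly} already in hand, what remains is a faithful weighted translation of the original Wang--Xia algebra.
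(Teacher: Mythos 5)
Your argument is correct and is essentially the paper's own proof: the same $f$-harmonic extension of a first boundary eigenfunction, the same variational characterization plus Cauchy--Schwarz giving $\lambda_{1,M}^{\mathrm{Ste}}\leq X/Y$, the same application of Proposition \ref{thm:0-reilly} with $\Delta_f u=0$ and $\Ric_f^0\geq 0$ to get $kX^2-2\lambda_{1,\partial M}XY+\sigma\lambda_{1,\partial M}Y^2\leq 0$, and the same conclusion (the paper completes the square rather than invoking the quadratic formula, and runs the Reilly step under an auxiliary $\Ric_f^N\geq 0$ only to record rigidity, which at $N=0$ reduces exactly to your step). Your anticipated obstacle is a non-issue since Proposition \ref{thm:0-reilly} already delivers the exact three-term boundary integrand, and the nonnegativity of the discriminant is indeed covered by Theorem \ref{thm:KM} (or simply by the existence of the solution $X/Y$).
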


\begin{rem}\label{rem:connXia}
Under the same setting as in Theorem \ref{thm:Wang-Xia},
the boundary $\bm$ is connected.
Moreover,
a Xia type estimate $\lambda_{1,\partial M}\geq k \sigma$ holds (see Theorems \ref{thm:Conn-Boundary} and \ref{thm:KM} below).
\end{rem}

\subsection{Reilly formula}\label{subsec:Reilly}
We derive a Reilly type formula associated with $\Ric^0_f$,
which is a key ingredient for the proof of Theorem \ref{thm:Wang-Xia}.
We begin with recalling the following Bochner type identity formulated by Wylie \cite{W2} (see \cite[Lemma 3.1]{W2}, and also \cite[Remark 4.4]{KLLS}):
\begin{prop}[\cite{W2}]\label{thm:0-bochner}
Let $(M,g,f)$ be an $n$-dimensional weighted Riemannian manifold.
Then for every $\varphi \in C^{\infty}(M)$,
\begin{align*}
    \frac{1}{2} \Delta_f |\nabla \varphi |^2=\Ric_f^0(\nabla \varphi, \nabla \varphi)   + \frac{\left(\Delta_f \varphi\right)^2}{n}+\left| \operatorname{Hess} \varphi-\frac{\Delta \varphi}{n} g\right|^2 + \e^{-\frac{2 f}{n}} \left\langle\nabla\left(\e^{\frac{2 f}{n}} \Delta_f \varphi\right), \nabla \varphi\right\rangle.
\end{align*}
\end{prop}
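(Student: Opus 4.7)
The plan is to derive the identity from the classical unweighted Bochner formula
\[
\frac{1}{2}\Delta|\nabla\varphi|^2 = \Ric(\nabla\varphi,\nabla\varphi) + |\Hess\varphi|^2 + \langle \nabla\Delta\varphi,\nabla\varphi\rangle
\]
by systematically rewriting each piece in terms of its $0$-weighted analogue.

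First I would convert the left-hand side from $\Delta$ to $\Delta_f$ by subtracting $\frac{1}{2}\langle \nabla f, \nabla |\nabla\varphi|^2\rangle = \Hess\varphi(\nabla\varphi, \nabla f)$. On the right, I would use the definition of $\Ric_f^0$ to substitute $\Ric = \Ric_f^0 - \Hess f - \frac{\d f\otimes \d f}{n}$ (the sign arising from the $N=0$ case of $-\d f\otimes\d f/(N-n)$), and in parallel split $\Delta\varphi = \Delta_f\varphi + \langle \nabla f, \nabla\varphi\rangle$ inside $\langle\nabla\Delta\varphi,\nabla\varphi\rangle$ and expand by the Leibniz rule. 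A direct check shows that the $\Hess f(\nabla\varphi, \nabla\varphi)$ contributions and the $\Hess\varphi(\nabla\varphi, \nabla f)$ contributions produced by these three substitutions cancel in pairs, leaving an intermediate identity whose only $f$-dependent remnants outside $\Ric_f^0$ are a single $-\frac{\langle \nabla f, \nabla\varphi\rangle^2}{n}$ term together with $\langle \nabla\Delta_f\varphi, \nabla\varphi\rangle$.

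To reach the stated form, I would then invoke the orthogonal decomposition $|\Hess\varphi|^2 = |\Hess\varphi - \frac{\Delta\varphi}{n}g|^2 + \frac{(\Delta\varphi)^2}{n}$ and expand $(\Delta\varphi)^2 = (\Delta_f\varphi)^2 + 2\Delta_f\varphi\,\langle \nabla f, \nabla\varphi\rangle + \langle \nabla f, \nabla\varphi\rangle^2$. The $\frac{\langle \nabla f, \nabla\varphi\rangle^2}{n}$ piece cancels the leftover from the $\Ric$-to-$\Ric_f^0$ conversion; the cross term $\frac{2}{n}\Delta_f\varphi\,\langle \nabla f, \nabla\varphi\rangle$ combines with $\langle \nabla \Delta_f\varphi, \nabla\varphi\rangle$ into $\e^{-2f/n}\langle \nabla(\e^{2f/n}\Delta_f\varphi), \nabla\varphi\rangle$ via a product-rule repackaging with integrating factor $\e^{2f/n}$. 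There is no deep obstacle — the whole computation is forced once one commits to expressing everything through $\Ric_f^0$ and $\Delta_f$; the only genuine insight is recognizing the integrating-factor structure that packages the remaining first-order piece into the gradient-of-product form displayed in the statement.
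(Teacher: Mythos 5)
Your derivation is correct: starting from the unweighted Bochner formula, the conversions to $\Delta_f$, to $\Ric = \Ric_f^0 - \Hess f - \frac{\d f\otimes\d f}{n}$, and to $\Delta\varphi = \Delta_f\varphi + \langle\nabla f,\nabla\varphi\rangle$ produce exactly the pairwise cancellations you describe, the traceless-Hessian decomposition absorbs the leftover $-\frac{1}{n}\langle\nabla f,\nabla\varphi\rangle^2$, and the cross term $\frac{2}{n}\Delta_f\varphi\,\langle\nabla f,\nabla\varphi\rangle$ together with $\langle\nabla\Delta_f\varphi,\nabla\varphi\rangle$ indeed repackages as $\e^{-\frac{2f}{n}}\langle\nabla(\e^{\frac{2f}{n}}\Delta_f\varphi),\nabla\varphi\rangle$. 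The paper itself offers no proof of this proposition, only the citation to \cite{W2} (and \cite{KLLS}), and your computation is essentially the standard derivation given in those references, so it correctly supplies the omitted verification.
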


From Proposition \ref{thm:0-bochner},
we deduce the following Reilly type formula (cf. \cite[Theorem 1]{MD}, \cite[Theorem 1.1]{KM1}):
\begin{prop}\label{thm:0-reilly}
Let $(M,g,f)$ be an $n$-dimensional compact weighted Riemannian manifold with boundary.
Then for every $\varphi \in C^{\infty}(M)$,
\begin{align*}
    &\quad \,\,  \int_M\left\{ {\Delta}_f \varphi\left({\Delta}_{\left(1+\frac{2}{n}\right) f} \varphi - \frac{ {\Delta}_f \varphi}{n}\right)-\Ric_f^0({\nabla} \varphi, {\nabla} \varphi)-\left|\operatorname{Hess} \varphi-\frac{{\Delta} \varphi}{n} g\right|^2 \right\} \ \d m_f \\
    & =\int_{\partial M} \left(\varphi_\nu^{\,2} H_{f,\partial M} +  2 \varphi_\nu \Delta_{f, \partial M}\,  \psi  + \mathrm{II}_{\partial M}\left(\nabla_{\partial M} \,\psi, \nabla_{\partial M}\, \psi \right)  \right) \ \d m_{f,\partial M} ,\nonumber
\end{align*}
where $\psi := \varphi|_{\partial M}$.
\end{prop}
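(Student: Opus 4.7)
The plan is to integrate the Bochner identity of Proposition \ref{thm:0-bochner} over $M$ with respect to the weighted measure $dm_f$ and then convert the interior terms that arise into boundary terms via integration by parts. Writing the formula as
\begin{equation*}
\frac{1}{2}\Delta_f|\nabla\varphi|^2 - \Ric_f^0(\nabla\varphi,\nabla\varphi) - \frac{(\Delta_f\varphi)^2}{n} - \Bigl|\Hess\varphi - \frac{\Delta\varphi}{n}g\Bigr|^2 = \e^{-\frac{2f}{n}}\bigl\langle\nabla(\e^{\frac{2f}{n}}\Delta_f\varphi),\nabla\varphi\bigr\rangle,
\end{equation*}
integrating and using the weighted divergence theorem $\int_M \Delta_f u\, dm_f = \int_{\partial M} u_\nu\, dm_{f,\partial M}$ on the first term on the left will produce $\tfrac{1}{2}\int_{\partial M}(|\nabla\varphi|^2)_\nu\, dm_{f,\partial M}$.

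For the right-hand side, I would exploit the identity $\e^{-\frac{2f}{n}}dm_f = dm_{(1+2/n)f}$ to rewrite that integral as $\int_M \langle \nabla(\e^{\frac{2f}{n}}\Delta_f\varphi),\nabla\varphi\rangle\, dm_{(1+2/n)f}$, then integrate by parts with respect to the measure $dm_{(1+2/n)f}$ using the Laplacian $\Delta_{(1+2/n)f}$. The interior term becomes $-\int_M \Delta_f\varphi\cdot\Delta_{(1+2/n)f}\varphi\, dm_f$ after the weights collapse ($\e^{\frac{2f}{n}-(1+\frac{2}{n})f}=\e^{-f}$), and the boundary term becomes $\int_{\partial M}\Delta_f\varphi\cdot\varphi_\nu\, dm_{f,\partial M}$. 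Rearranging produces
\begin{equation*}
\int_M\{\cdots\}\, dm_f = \int_{\partial M}\Bigl(\Delta_f\varphi\cdot\varphi_\nu - \tfrac{1}{2}(|\nabla\varphi|^2)_\nu\Bigr)\,dm_{f,\partial M},
\end{equation*}
where the interior integrand matches the left-hand side of the target identity.

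The remaining and main technical step is the boundary analysis: I need to show the RHS above equals the claimed boundary integral. Decomposing $\nabla\varphi = \nabla_{\partial M}\psi + \varphi_\nu\nu$ and using $\tfrac{1}{2}(|\nabla\varphi|^2)_\nu = \Hess\varphi(\nabla\varphi,\nu)$ together with the standard splitting $\Delta\varphi = \Delta_{\partial M}\psi + H_{\partial M}\varphi_\nu + \Hess\varphi(\nu,\nu)$ and the identity $\Hess\varphi(X,\nu) = X(\varphi_\nu) - \mathrm{II}_{\partial M}(X,\nabla_{\partial M}\psi)$ for $X$ tangent to $\partial M$, a direct computation yields
\begin{equation*}
\Delta_f\varphi\cdot\varphi_\nu - \tfrac{1}{2}(|\nabla\varphi|^2)_\nu = \varphi_\nu^2 H_{f,\partial M} + \varphi_\nu\Delta_{f,\partial M}\psi + \mathrm{II}_{\partial M}(\nabla_{\partial M}\psi,\nabla_{\partial M}\psi) - \langle\nabla_{\partial M}\psi,\nabla_{\partial M}\varphi_\nu\rangle,
\end{equation*}
where the terms involving $\langle\nabla f,\nabla\varphi\rangle$ and the tangential part of $\nabla f$ combine to yield the weighted quantities $H_{f,\partial M}$ and $\Delta_{f,\partial M}\psi$.

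The main obstacle is reconciling the coefficient $2$ on the cross-term $\varphi_\nu\Delta_{f,\partial M}\psi$. This will come from integrating by parts on the closed manifold $\partial M$: since $\partial M$ has no boundary,
\begin{equation*}
\int_{\partial M}\langle\nabla_{\partial M}\psi,\nabla_{\partial M}\varphi_\nu\rangle\,dm_{f,\partial M} = -\int_{\partial M}\varphi_\nu\Delta_{f,\partial M}\psi\,dm_{f,\partial M},
\end{equation*}
which turns the $-\langle\nabla_{\partial M}\psi,\nabla_{\partial M}\varphi_\nu\rangle$ term into a second copy of $\varphi_\nu\Delta_{f,\partial M}\psi$, upgrading the coefficient from $1$ to $2$ and completing the identification with the stated boundary integrand.
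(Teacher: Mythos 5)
Your proposal is correct and follows essentially the same route as the paper: integrate the Bochner identity of Proposition \ref{thm:0-bochner} against $\d m_f$, handle the term $\e^{-\frac{2f}{n}}\langle\nabla(\e^{\frac{2f}{n}}\Delta_f\varphi),\nabla\varphi\rangle$ by passing to the measure $m_{\left(1+\frac{2}{n}\right)f}$ and integrating by parts, and then perform the standard Reilly-type boundary decomposition, with the integration by parts on the closed manifold $\partial M$ producing the coefficient $2$ on $\varphi_\nu\Delta_{f,\partial M}\psi$. The only difference is that you write out explicitly the boundary computation which the paper delegates to the argument of \cite[Theorem 1]{MD}, and your pointwise identity there checks out.
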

\begin{proof}
By integration by parts,
\begin{align*}
    &\quad \,\,  \int_M \e^{-\frac{2 f}{n}} \left\langle{\nabla}\left(\e^{\frac{2 f}{n}} {\Delta}_f \varphi\right), {\nabla} \varphi\right\rangle\  \d m_f\\
    &  =\int_M \left\langle {\nabla}\left(\e^{\frac{2 f}{n}} {\Delta}_f \varphi\right), {\nabla} \varphi\right\rangle \ \d m_{\left(1+\frac{2}{n}\right) f}\\
    & = \int_{\partial M}\left(\e^{\frac{2 f}{n}} \Delta_f \varphi \right) \varphi_\nu \ \d m_{\left(1+\frac{2}{n}\right) f, \partial M} -\int_M\left(\e^{\frac{2 f}{n}} {\Delta}_f \varphi\right) {\Delta}_{\left(1+\frac{2}{n}\right) f} \varphi \ \d m_{\left(1+\frac{2}{n}\right) f}\\
    & =\int_{\partial M}  \varphi_\nu \Delta_f \varphi  \ \d m_{f,\partial M} -\int_M {\Delta}_f \varphi {\Delta}_{\left(1+\frac{2}{n}\right) f} \varphi\ \d m_f.
\end{align*}
By using this equation and Proposition \ref{thm:0-bochner},
we see
\begin{align}
    &\quad \,\,\int_M \left\{{\Delta}_f \varphi\left({\Delta}_{\left(1+\frac{2}{n}\right) f} \varphi - \frac{ {\Delta}_f \varphi}{n}\right)-\Ric_f^0({\nabla} \varphi, {\nabla} \varphi)-\left|\operatorname{Hess} \varphi-\frac{{\Delta} \varphi}{n} g\right|^2 \right\}\ \d m_f\label{eq:0-reilly-2} \\
    &= \int_{\partial M} \varphi_\nu \Delta_f \varphi \ \d m_{f,\partial M} - \frac{1}{2} \int_M {\Delta}_f |{\nabla} \varphi|^2 \ \d m_f.\nonumber
\end{align}
The second term of the right-hand side of \eqref{eq:0-reilly-2} can be calculated as
\begin{align*}
    \frac{1}{2} \int_M \Delta_f|\nabla \varphi|^2 \ \d m_f &=\frac{1}{2} \int_{\partial M} \left(|\nabla \varphi|^2\right)_{\nu} \ \d m_{f, \partial M}\\
    &= \int_{\partial M} \varphi_\nu\left(\Delta_f \varphi-\Delta_{f, \partial M}\, \psi-\varphi_\nu H_{f,\partial M}\right) \ \d m_{f, \partial M} \\ 
    & \quad\,\, +\int_{\partial M}  \left(\langle \nabla_{\partial M}\, \varphi_\nu, \nabla_{\partial M}\, \psi\rangle - \mathrm{II}_{\partial M}(\nabla_{\partial M}\,\psi, \nabla_{\partial M}\,\psi)\right) \ \d m_{f, \partial M} \\ 
    & =\int_{\partial M} \varphi_\nu\left(\Delta_f \varphi-2 \Delta_{f, \partial M}\, \psi-\varphi_\nu H_{f,\partial M}\right) \ \d m_{f, \partial M} \\ 
    & \quad\,\, -\int_{\partial M} \mathrm{II}_{\partial M}(\nabla_{\partial M}\,\psi, \nabla_{\partial M}\,\psi)  \ \d m_{f, \partial M},
\end{align*}
where we used the argument in \cite[Theorem 1]{MD} (see also \cite[Chapter 8]{Li}).
This completes the proof.
\end{proof}

We also review a Reilly type formula for $\Ric^1_f$.
Li-Xia \cite{LX} have formulated a Reilly type formula for affine connections (see \cite[Theorem 1.1]{LX}).
As a special case,
we possess:
\begin{prop}[\cite{LX}]\label{thm:1-reilly}
Let $(M,g,f)$ be an $n$-dimensional compact weighted Riemannian manifold with boundary.
Then for every $\varphi \in C^{\infty}(M)$,
\begin{align*}
 &\quad \,\,\int_M  \left\{ \left(\Delta_{\frac{n}{n-1}f} \,\varphi\right)^2 - \Ric_f^1\left( \nabla \varphi, \nabla \varphi \right) - \left| \mathrm{Hess}\, \varphi - \frac{1}{n-1}\langle \nabla\varphi,\nabla f \rangle g\right|^2 \right\}\ \d m_f\\
    &= \int_{\partial M} \left(\varphi_\nu^{\,2} H_{f,\partial M} +  2 \varphi_\nu \Delta_{f, \partial M}\,  \psi  + \mathrm{II}_{\partial M}\left(\nabla_{\partial M} \,\psi, \nabla_{\partial M}\, \psi \right)  \right) \ \d m_{f,\partial M},
\end{align*}
where $\psi := \varphi|_{\partial M}$.
\end{prop}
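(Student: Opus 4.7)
The plan is to reproduce the structure of the proof of Proposition \ref{thm:0-reilly}: establish a pointwise Bochner-type identity adapted to $\Ric^1_f$, integrate it against $\d m_f$, and process the boundary terms by the same manipulations. The identity is stated as a special case of the general affine-connection Reilly formula of Li--Xia \cite{LX}, so one may alternatively specialize their Theorem~1.1; the direct route described below is equivalent and stays within the smooth weighted framework.

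The pointwise starting point is the $f$-Bochner identity
\begin{equation*}
\tfrac{1}{2}\Delta_f|\nabla\varphi|^2 = \Ric^{\infty}_f(\nabla\varphi,\nabla\varphi) + |\Hess\varphi|^2 + \langle\nabla\varphi,\nabla\Delta_f\varphi\rangle,
\end{equation*}
where $\Ric^{\infty}_f = \Ric + \Hess f$. Setting $\alpha := \langle\nabla\varphi,\nabla f\rangle$ and using $\Ric^{\infty}_f = \Ric^1_f - \tfrac{1}{n-1}\,\d f \otimes \d f$, the Ricci term becomes $\Ric^1_f(\nabla\varphi,\nabla\varphi) - \tfrac{\alpha^2}{n-1}$. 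The key observation is that the trace of $\Hess\varphi - \tfrac{1}{n-1}\alpha\, g$ equals $\Delta_{\tfrac{n}{n-1}f}\varphi$; expanding the squared norm and collecting terms gives
\begin{equation*}
|\Hess\varphi|^2 - \tfrac{\alpha^2}{n-1} = (\Delta_f\varphi)^2 - \bigl(\Delta_{\tfrac{n}{n-1}f}\varphi\bigr)^2 + \Bigl|\Hess\varphi - \tfrac{1}{n-1}\alpha\, g\Bigr|^2.
\end{equation*}
Substituting into the Bochner identity yields the pointwise equation
\begin{equation*}
\bigl(\Delta_{\tfrac{n}{n-1}f}\varphi\bigr)^2 - \Ric^1_f(\nabla\varphi,\nabla\varphi) - \Bigl|\Hess\varphi - \tfrac{1}{n-1}\alpha\, g\Bigr|^2 = (\Delta_f\varphi)^2 + \langle\nabla\varphi,\nabla\Delta_f\varphi\rangle - \tfrac{1}{2}\Delta_f|\nabla\varphi|^2.
\end{equation*}

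Integrating this against $\d m_f$, the first two right-hand-side terms collapse by weighted integration by parts into $\int_{\partial M}\varphi_\nu\Delta_f\varphi\,\d m_{f,\partial M}$, while the last reduces to $\tfrac{1}{2}\int_{\partial M}(|\nabla\varphi|^2)_\nu\,\d m_{f,\partial M}$ via the weighted divergence theorem. The resulting boundary combination is exactly the one handled in the proof of Proposition \ref{thm:0-reilly}, so the same decomposition --- writing $\tfrac{1}{2}(|\nabla\varphi|^2)_\nu = \Hess\varphi(\nu,\nabla\varphi)$, applying the Weingarten relation, and integrating by parts along $\partial M$ --- delivers the claimed identity. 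The main obstacle is the algebraic step above: the nonobvious point is that $\Delta_{\tfrac{n}{n-1}f}$, rather than $\Delta_f$, is forced upon us as the weighted Laplacian that completes the square against $\Hess\varphi - \tfrac{1}{n-1}\alpha\, g$; once this is recognized, the remaining computation runs parallel to that of Proposition \ref{thm:0-reilly}.
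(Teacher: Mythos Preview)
Your argument is correct. The paper does not actually prove this proposition; it merely records it as a special case of the affine-connection Reilly formula in \cite[Theorem~1.1]{LX} and moves on. Your route is therefore genuinely different in execution: rather than specializing the general Li--Xia formula, you give a self-contained derivation that parallels the proof of Proposition~\ref{thm:0-reilly}. The algebraic identity you isolate,
\[
|\Hess\varphi|^2 - \tfrac{\alpha^2}{n-1} = (\Delta_f\varphi)^2 - \bigl(\Delta_{\tfrac{n}{n-1}f}\varphi\bigr)^2 + \Bigl|\Hess\varphi - \tfrac{\alpha}{n-1}\, g\Bigr|^2,
\]
is exactly right (it follows from $\tr\bigl(\Hess\varphi - \tfrac{\alpha}{n-1}g\bigr) = \Delta_{\tfrac{n}{n-1}f}\varphi$ and a direct expansion), and after integration the boundary combination $\int_{\partial M}\varphi_\nu\Delta_f\varphi\,\d m_{f,\partial M} - \tfrac12\int_{\partial M}(|\nabla\varphi|^2)_\nu\,\d m_{f,\partial M}$ is literally the one already processed in the proof of Proposition~\ref{thm:0-reilly}, so you may quote that computation verbatim. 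The advantage of your approach is that it keeps the paper self-contained and makes transparent why the operator $\Delta_{\tfrac{n}{n-1}f}$ appears; the advantage of citing \cite{LX} is brevity and the availability of a more general framework should one later need Reilly formulas for other values of~$N$.
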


\begin{rem}
By direct calculations,
for every $\varphi \in C^{\infty}(M)$ we see
\begin{align}\label{eq:calcution}
&\quad \,\,\left(\Delta_{f} \,\varphi\right)^2 - \Ric_f^\infty\left( \nabla \varphi, \nabla \varphi \right) - \left| \mathrm{Hess}\, \varphi\right|^2\\ \notag
&=\left(\Delta_{\frac{n}{n-1}f} \,\varphi\right)^2 - \Ric_f^1\left( \nabla \varphi, \nabla \varphi \right) - \left| \mathrm{Hess}\, \varphi - \frac{1}{n-1}\langle \nabla\varphi,\nabla f \rangle g\right|^2.
\end{align}
We can verify Proposition \ref{thm:1-reilly} by combining \eqref{eq:calcution} and the weighted Reilly formula associated with $\Ric^\infty_f$ in \cite{MD} (see \cite[Theorem 1]{MD}, and also \cite[Theorem 1.1]{KM1}).
\end{rem}

\subsection{Proof of Theorem \ref{thm:Wang-Xia}}\label{subsec:Wang-Xia}
The aim of this subsection is to prove Theorem \ref{thm:Wang-Xia}.
Before we do so,
we recall a connectedness principle for boundary proven by Wylie \cite{W2} (see \cite[Corollary 5.2]{W2}).
In our setting,
it can be stated as follows (cf. \cite[Theorem 4.1]{KM2}):
\begin{thm}[\cite{W2}]\label{thm:Conn-Boundary}
Let $(M,g,f)$ be a compact weighted Riemannian manifold with boundary.
If $\Ric_f^1 \geq 0$ and $H_{f,\partial M} > 0$,
then $\partial M$ is connected.
\end{thm}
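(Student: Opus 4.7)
The plan is to prove this by a Frankel-type argument: assume $\partial M$ has at least two components, take a minimizing geodesic between two of them, and then push a weighted second variation of arc length to a contradiction using the interaction of $\Ric_f^1 \geq 0$ with $H_{f,\partial M} > 0$.

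First I would set up the geometry. Suppose for contradiction that $\partial M$ has two distinct components $\Sigma_1, \Sigma_2$. By compactness there is a unit-speed minimizing geodesic $\gamma : [0,L] \to M$ realizing $d(\Sigma_1,\Sigma_2)$; by the first variation formula $\gamma$ meets $\partial M$ orthogonally, so $\gamma'(0) = -\nu$ at $\Sigma_1$ and $\gamma'(L) = +\nu$ at $\Sigma_2$. Pick a parallel orthonormal frame $E_1,\dots,E_{n-1}$ along $\gamma$ with $E_i \perp \gamma'$; at the endpoints these lie in $T\Sigma_i$. For a smooth $\phi:[0,L]\to\IR$ consider the variations with fields $V_i = \phi\, E_i$; since $\gamma$ is minimizing, summing the standard second variation over $i$ yields
\begin{equation*}
(n-1)\int_0^L (\phi')^2\,\d t - \int_0^L \phi^2 \Ric(\gamma',\gamma')\,\d t - \phi(L)^2 H_{\Sigma_2} - \phi(0)^2 H_{\Sigma_1} \;\geq\; 0,
\end{equation*}
where $H_{\Sigma_i}$ denotes the mean curvature with respect to the outer normal.

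Next I would convert $\Ric$ to $\Ric_f^1$. Along $\gamma$, with $u(t) := f(\gamma(t))$, one has $\Hess f(\gamma',\gamma')=u''$ and $\langle\nabla f,\gamma'\rangle=u'$, hence
\begin{equation*}
\Ric(\gamma',\gamma') \;=\; \Ric_f^1(\gamma',\gamma') - u'' - \tfrac{(u')^2}{n-1}.
\end{equation*}
Integrating the $u''$ term by parts produces a boundary contribution $[\phi^2 u']_0^L = \phi(L)^2 f_\nu|_{\Sigma_2} + \phi(0)^2 f_\nu|_{\Sigma_1}$, and the key algebraic observation is the perfect-square identity
\begin{equation*}
(n-1)(\phi')^2 - 2\phi\phi' u' + \tfrac{\phi^2(u')^2}{n-1} \;=\; (n-1)\bigl(\phi' - \tfrac{\phi u'}{n-1}\bigr)^{\!2}.
\end{equation*}
Combining these transforms the second variation inequality into
\begin{equation*}
(n-1)\int_0^L \Bigl(\phi' - \tfrac{\phi u'}{n-1}\Bigr)^{\!2}\,\d t \;-\;\int_0^L \phi^2\,\Ric_f^1(\gamma',\gamma')\,\d t \;-\; \phi(L)^2 H_{f,\Sigma_2} - \phi(0)^2 H_{f,\Sigma_1} \;\geq\; 0,
\end{equation*}
where the boundary terms reassemble into $H_{f,\partial M} = H_{\partial M} - f_\nu$ thanks to the sign analysis of $\gamma'$ at the two endpoints.

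Finally I would choose the test function to kill the integrals. Take $\phi(t) := \exp\bigl(u(t)/(n-1)\bigr)$, so that $(n-1)\phi' = \phi u'$ and the first integrand vanishes identically. The hypothesis $\Ric_f^1 \geq 0$ kills the second integral as well, leaving
\begin{equation*}
0 \;\leq\; -\phi(L)^2 H_{f,\Sigma_2} - \phi(0)^2 H_{f,\Sigma_1},
\end{equation*}
which contradicts $H_{f,\partial M} > 0$. Hence $\partial M$ is connected. The main obstacle is the bookkeeping in the middle step: making sure the integration by parts, the sign of $f_\nu$ at each endpoint (tied to whether $\gamma'$ is $+\nu$ or $-\nu$), and the completion of the square all line up to yield exactly the weighted mean curvature $H_{f,\partial M}$ at the boundary; the algebraic identity above is the trick that makes the $\Ric_f^1$ curvature, rather than $\Ric$, the right object to use in concert with the choice $\phi = \e^{f/(n-1)}$.
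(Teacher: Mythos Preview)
Your proof is correct. The paper does not give its own proof of this statement; it is quoted from Wylie \cite{W2} (there, Corollary~5.2). However, your argument is essentially the same Frankel-type computation that the paper \emph{does} carry out in its proof of Proposition~\ref{thm:1-Frankel-strong} (two closed $f$-minimal hypersurfaces must intersect under $\Ric_f^1>0$): both take a minimizing geodesic, use the parallel frame scaled by $\e^{f/(n-1)}$ as test variations, and obtain a contradiction from the weighted boundary terms. The only difference is packaging: the paper invokes the weighted connection $\nabla^{\varphi}$ with $\varphi=f/(n-1)$ and the index-form identity of Remark~\ref{rem:second-variation-formula} to get the $\Ric_f^1$-term and the perfect square in one stroke, whereas you reach the same identity by integrating $\phi^2 u''$ by parts and completing the square by hand. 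Either way the strictly positive quantity at the end is $H_{f,\partial M}$ (in your case) or $\Ric_f^1$ (in theirs), and your sign bookkeeping for $\gamma'(0)=-\nu$, $\gamma'(L)=\nu$ is right, so the weighted mean curvatures appear with the correct negative signs.
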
 

We also present a Xia type estimate.
Kolesnikov-Milman \cite{KM2} have proved such an estimate under the non-negativity of $\Ric^0_f$ (see \cite[Theorem 1.3]{KM2}).
We here point out that
it can be generalized under the non-negativity of $\Ric^1_f$ via Proposition \ref{thm:1-reilly}.
\begin{thm}\label{thm:KM}
Let $(M,g,f)$ be a compact weighted Riemannian manifold with boundary.
For $\sigma, k > 0$, we assume $\Ric_f^1 \geq 0$, $\mathrm{II}_{\partial M} \geq \sigma\, g_{\partial M}$ and $H_{f,\partial M} \geq  k$.
Then we have
\begin{equation*}
   \lambda_{1,\partial M} \geq k\sigma.
\end{equation*}
\end{thm}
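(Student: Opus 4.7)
The plan is to mimic the classical Xia argument, using the Reilly type formula of Proposition \ref{thm:1-reilly} in place of the unweighted one. Let $\psi \in C^{\infty}(\partial M)$ be a first eigenfunction of $\Delta_{f,\partial M}$, so that $\Delta_{f,\partial M}\psi = -\lambda_{1,\partial M}\psi$ and $\psi \not\equiv 0$. I would extend $\psi$ to $M$ by solving the Dirichlet problem
\begin{equation*}
\Delta_{\frac{n}{n-1}f}\,\varphi = 0 \text{ on } M, \qquad \varphi|_{\partial M} = \psi.
\end{equation*}
The operator $\Delta_{\frac{n}{n-1}f}$ is uniformly elliptic and symmetric with respect to the measure $\e^{-\frac{n}{n-1}f}\,\d v_g$, so a smooth solution $\varphi$ exists by standard elliptic theory. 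The key feature of this choice is that the leading term $(\Delta_{\frac{n}{n-1}f}\varphi)^{2}$ on the left-hand side of Proposition \ref{thm:1-reilly} vanishes identically.

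With this $\varphi$, and using $\Ric_{f}^{1}\geq 0$, the left-hand side of Proposition \ref{thm:1-reilly} is non-positive, and hence so is the right-hand side. Setting
\begin{equation*}
A := \int_{\partial M}\varphi_{\nu}^{\,2}\,\d m_{f,\partial M},\qquad B := \int_{\partial M}\psi^{2}\,\d m_{f,\partial M},\qquad C := \int_{\partial M}\varphi_{\nu}\,\psi\,\d m_{f,\partial M},
\end{equation*}
and applying $H_{f,\partial M}\geq k$, $\mathrm{II}_{\partial M}\geq \sigma\, g_{\partial M}$ and $\Delta_{f,\partial M}\psi=-\lambda_{1,\partial M}\psi$, together with the Rayleigh identity $\int_{\partial M}|\nabla_{\partial M}\psi|^{2}\,\d m_{f,\partial M} = \lambda_{1,\partial M}\,B$ (integration by parts on the closed manifold $\partial M$), one obtains
\begin{equation*}
kA\;-\;2\lambda_{1,\partial M}\,C\;+\;\sigma\lambda_{1,\partial M}\,B\;\leq\;0.
\end{equation*}

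Since $B>0$ and $k,\sigma,\lambda_{1,\partial M}>0$, the displayed inequality forces $A>0$ as well (otherwise $\sigma\lambda_{1,\partial M}B\leq 0$, a contradiction). I would then combine the Cauchy--Schwarz bound $|C|\leq\sqrt{AB}$ with the AM--GM inequality $kA+\sigma\lambda_{1,\partial M}B\geq 2\sqrt{k\sigma\lambda_{1,\partial M}\,AB}$ to obtain $2\sqrt{k\sigma\lambda_{1,\partial M}\,AB}\leq 2\lambda_{1,\partial M}\sqrt{AB}$, whence $\lambda_{1,\partial M}\geq k\sigma$, as required.

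The main design choice, and arguably the only real obstacle, is identifying the correct auxiliary equation. Because Proposition \ref{thm:1-reilly} displays the term $(\Delta_{\frac{n}{n-1}f}\varphi)^{2}$ with coefficient $1$ (rather than $1/n$, as in the classical Reilly formula), it can be eliminated exactly by prescribing $\Delta_{\frac{n}{n-1}f}\varphi = 0$; the more naive extensions by $\Delta_{f}\varphi=0$ or $\Delta\varphi=0$ would leave an uncontrolled positive contribution on the left and spoil the sign needed to conclude. Once this bookkeeping is set up, the remainder follows the standard Xia/Kolesnikov--Milman template, and the argument is sharp in the ratio $\sqrt{A/B}$, suggesting that an equality case analysis could be carried out by forcing equality simultaneously in Cauchy--Schwarz, AM--GM, and the Reilly formula.
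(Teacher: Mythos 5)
Your proposal is correct and takes essentially the same route as the paper: the same auxiliary Dirichlet problem $\Delta_{\frac{n}{n-1}f}\varphi=0$ with the first eigenfunction as boundary data, followed by the same application of Proposition \ref{thm:1-reilly} with $\Ric_f^1\geq 0$, $\mathrm{II}_{\partial M}\geq\sigma g_{\partial M}$ and $H_{f,\partial M}\geq k$. The only cosmetic difference is the final algebraic step, where the paper completes the square in $\varphi_\nu$ while you use Cauchy--Schwarz together with AM--GM; these are equivalent.
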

\begin{proof}
Let $z$ be an eigenfunction for $\lambda_{1,\partial M}$,
and let $u$ be the solution to the following boundary value problem:
\begin{align*}
    \begin{cases}
      \Delta_{\frac{n}{n-1}f}\,u = 0 &\mbox{ on }M,\\
        u = z &\mbox{ on }\partial M.
    \end{cases}
\end{align*}
Using Proposition \ref{thm:1-reilly},
we obtain
\begin{align*}
    0 &\geq \int_{\partial M} \left( k u_\nu^2- 2 \lambda_{1,\partial M} \,u_\nu \, z + \sigma |\nabla_{\partial M}\,z|^2 \right)\ \d m_{f,\partial M}\\
    &= \int_{\partial M} \left( k u_\nu^2- 2\lambda_{1,\partial M}\, u_\nu\, z + \sigma\lambda_{1,\partial M}\, z^2 \right)\ \d m_{f,\partial M}\\
    &= \int_{\partial M} \left\{ k \left( u_\nu - \frac{\lambda_{1,\partial M}\, z}{k} \right)^2 + \lambda_{1,\partial M}\,z^2 \left( \sigma - \frac{\lambda_{1,\partial M}}{k} \right) \right\}  \d m_{f,\partial M}.\\
\end{align*}
It follows that
\begin{equation*}
    \sigma - \frac{\lambda_{1,\partial M}}{k}\leq 0.
\end{equation*}
This proves the desired estimate.
\end{proof}

We are now in a position to prove Theorem \ref{thm:Wang-Xia}.
\begin{proof}[Proof of Theorem \ref{thm:Wang-Xia}]
Let $z$ be an eigenfunction for $\lambda_{1,\partial M}$, and let $u$ be the solution to the following boundary value problem:
\begin{equation*}
    \begin{cases}
        \Delta_f u = 0 &\mbox{ on }M,\\
        u = z &\mbox{ on }\partial M.
    \end{cases}
\end{equation*}
By integration by parts,
and the Cauchy-Schwarz inequality,
\begin{align}\label{eq:Wang-Xia-1}
    \frac{\int_M|{\nabla} u|^2 \ \d m_f}{\int_{\partial M} z^2 \ \d m_{f,\partial M}} 
    &=\frac{ \int_{\partial M} u_\nu z \ \d m_{f,\partial M}}{\int_{\partial M} z^2 \ \d m_{f,\partial M}}\\ \notag
    & \leq \frac{\left(\int_{\partial M} z^2 \ \d m_{f,\partial M}\right)^{\frac{1}{2}}\left(\int_{\partial M}u_\nu^{\,2} \ \d m_{f,\partial M}\right)^{\frac{1}{2}}}{ \int_{\partial M} z^2 \ \d m_{f,\partial M}}=\frac{\left(\int_{\partial M} u_\nu^{\,2} \ \d m_{f,\partial M}\right)^{\frac{1}{2}}}{\left(\int_{\partial  M}z^2\ \d m_{f,\partial M}\right)^{\frac{1}{2}}}.
\end{align}
The first non-zero Steklov eigenvalue has the following variational characterization (see e.g., \cite{BS}):
\begin{equation}\label{eq:Wang-Xia-2}
   \lambda_{1,M}^{\mathrm{Ste}}=\inf_{w\in C^{\infty}(M)\backslash \{0\}} \left\{\frac{\int_M|{\nabla} w|^2\ \d m_f}{\int_{\partial M} w^2 \ \d m_{f,\partial M}}\ \bigg| \ \int_{\partial M} w \ \d m_{f,\partial M}=0\right\}.
\end{equation}
Since $\int_{\partial M} z \ \d m_{f,\partial M} = 0$, it follows from \eqref{eq:Wang-Xia-1} and \eqref{eq:Wang-Xia-2} that
\begin{equation}\label{eq:Wang-Xia-3}
   \lambda_{1,M}^{\mathrm{Ste}} \leq \frac{\left(\int_{\partial M} u_\nu^{\,2} \ \d m_{f,\partial M}\right)^{\frac{1}{2}}}{\left(\int_{\partial  M}z^2\ \d m_{f,\partial M}\right)^{\frac{1}{2}}}.
\end{equation}
Let us estimate the right-hand side of \eqref{eq:Wang-Xia-3}.
Due to Proposition \ref{thm:0-reilly}, we have 
\begin{align}
    0 &\geq \int_{\partial M}\left(u_\nu^{\,2} H_{f,\partial M} +2 u_\nu \Delta_{f,\partial M}\, z+\mathrm{II}_{\partial M}(\nabla_{\partial M}\, z, \nabla_{\partial M}\, z)\right) \ \d m_{f,\partial M} \label{eq:Wang-Xia-4}\\
    &\geq  k \int_{\partial M} u_\nu^{\,2} \ \d m_{f,\partial M} -2 \lambda_{1,\partial M} \int_{\partial M} u_\nu z \ \d m_{f,\partial M} + \sigma \lambda_{1,\partial M} \int_{\partial M} z^2 \ \d m_{f,\partial M} \nonumber\\
    & \geq k \int_{\partial M} u_\nu^{\,2} \ \d m_{f,\partial M} -2 \lambda_{1,\partial M}\left(\int_{\partial M} u_\nu^{\,2} \ \d m_{f,\partial M}\right)^{\frac{1}{2}}\left(\int_{\partial M} z^2 \ \d m_{f,\partial M}\right)^{\frac{1}{2}}\nonumber\\
    &\quad\,\, +\sigma \lambda_{1,\partial M} \int_{\partial M} z^2 \ \d m_{f,\partial M}\nonumber\\
    & =\frac{k \sigma -\lambda_{1,\partial M}}{\sigma} \int_{\partial M} u_\nu^{\,2} \ \d m_{f,\partial M}\nonumber\\
    &\quad\,\, +\left\{\sqrt{\frac{\lambda_{1,\partial M}}{\sigma}}\left(\int_{\partial M} u_\nu^{\,2} \ \d m_{f,\partial M}\right)^{\frac{1}{2}}-\sqrt{\sigma \lambda_{1,\partial M}}\left(\int_{\partial M} z^2 \ \d m_{f,\partial M}\right)^{\frac{1}{2}}\right\}^2\nonumber.
\end{align}
This leads us to
\begin{align*}
    &\quad\,\,\sqrt{\frac{\lambda_{1,\partial M}}{\sigma}}\left(\int_{\partial M} u_\nu^{\,2} \ \d m_{f,\partial M}\right)^{\frac{1}{2}}-\sqrt{\sigma \lambda_{1,\partial M}}\left(\int_{\partial M} z^2 \ \d m_{f,\partial M}\right)^{\frac{1}{2}}\\
   &\leq \frac{\sqrt{\lambda_{1,\partial M}-k \sigma}}{\sqrt{\sigma} }\left(\int_{\partial M} u_\nu^{\,2} \ \d m_{f,\partial M} \right)^{\frac{1}{2}};
\end{align*}
in particular,
\begin{align*}
   \frac{\sqrt{\lambda_{1,\partial M}}-\sqrt{\lambda_{1,\partial M}-k\sigma}}{\sqrt{\sigma}}\left(\int_{\partial M} u_\nu^{\,2} \ \d m_{f,\partial M}\right)^{\frac{1}{2}} \leq \sqrt{\sigma \lambda_{1,\partial M}}\left(\int_{\partial M} z^2 \ \d m_{f,\partial M}\right)^{\frac{1}{2}}.
\end{align*}
We conclude
\begin{align*}
   \left(\int_{\partial M} u_\nu^{\,2} \ \d m_{f,\partial M}\right)^{\frac{1}{2}} 
   &\leq \frac{\sqrt{\lambda_{1,\partial M}}\left(\sqrt{\lambda_{1,\partial M}}+\sqrt{\lambda_{1,\partial M}-k \sigma }\right)}{k}\left(\int_{\partial M} z^2 \ \d m_{f,\partial M} \right)^{\frac{1}{2}}.
\end{align*}
Combining this with \eqref{eq:Wang-Xia-3},
we complete the proof.
\end{proof}

\begin{rem}\label{rem:Steklov-rigidity}
We assume that the equality in Theorem \ref{thm:Wang-Xia} holds.
Then the equalities in \eqref{eq:Wang-Xia-4} also hold,
and hence 
\begin{align}\label{eq:equality-case}
    \Ric_f^0 (\nabla u,\nabla u) = 0,\quad  \Hess u = \frac{\Delta u}{n}g
\end{align}
on $M$, and 
\begin{equation*}
    H_{f,\partial M} \equiv k, \quad \mathrm{II}_{\partial M}(\nabla_{\partial M} \, z, \nabla_{\partial M} \, z) = \sigma |\nabla_{\partial M}\, z|^2, \quad u_\nu = \frac{\sqrt{\lambda_{1,\partial M}}\left(\sqrt{\lambda_{1,\partial M}} + \sqrt{\lambda_{1,\partial M} - k\sigma}\right)}{k} z
\end{equation*}
on $\partial M$.

We consider the case where the equality in Theorem \ref{thm:Wang-Xia} holds under a restrictive condition $\Ric_f^N \geq 0$ for $N \in (-\infty,0)$.
In this case,
by noticing 
\begin{align*}
 \Delta u = \langle \nabla u, \nabla f \rangle,\quad \Ric_f^0 (\nabla u,\nabla u) = \Ric_f^N (\nabla u,\nabla u) + \frac{N}{n(N-n)}(\Delta u)^2,
\end{align*}
we further see 
\begin{align*}
    \frac{N}{n(N-n)}(\Delta u)^2 = 0.
\end{align*}
Therefore,
$u$ is harmonic,
and the second identity in \eqref{eq:equality-case} leads to $\Hess u \equiv 0$.
In particular,
the argument of the proof of the rigidity statement in \cite[Theorem 1.1]{BS} works,
and hence the equality in Theorem \ref{thm:KM} also holds.

If the equality holds under a more restrictive condition $\Ric_f^N \geq 0$ for $N \in [n,+\infty)$,
then further rigidity properties can be available (cf. \cite[Theorem 1.6]{HR}).
\end{rem}

\begin{rem}
The authors do not know whether Theorem \ref{thm:Wang-Xia} can be extended to a weak setting as in Theorem \ref{thm:KM}.
It seems that Proposition \ref{thm:1-reilly} is not compatible with the integration by parts demonstrated in \eqref{eq:Wang-Xia-1}.
This remark can also be applied to Theorem \ref{thm:Choi-Wang} in the next section (see the calculation in \eqref{eq:Choi-Wang-1} below).
\end{rem}

\section{Eigenvalue estimate on weighted minimal hypersurface}\label{sec:Choi-Wang}

On a weighted Riemannian manifold $(M,g,f)$,
a hypersurface $\Sigma$ in $M$ is called \textit{$f$-minimal} if the weighted mean curvature $H_{f,\Sigma}$ vanishes identically on $\Sigma$,
where $H_{f,\Sigma}$ is defined as \eqref{def:weighted-mean-curvature} for a unit normal vector field $\nu$ on $\Sigma$.
In this section,
we study the eigenvalue problem on a closed weighted minimal hypersurface $\Sigma$.
We denote by $\lambda_{1,\Sigma}$ the non-zero eigenvalue for the weighted Laplacian $\Delta_{f,\Sigma}$.

Our second main theorem is the following Choi-Wang type first non-zero eigenvalue estimate:
\begin{thm}\label{thm:Choi-Wang}
Let $(M,g,f)$ be a compact weighted Riemannian manifold. 
For $K > 0$, we assume $\Ric_f^0 \geq K g$.
Let $\Sigma$ be a closed embedded $f$-minimal hypersurface in $M$.
Then
\begin{equation*}
    \lambda_{1,\Sigma} \geq \frac{K}{2}.
\end{equation*}
\end{thm}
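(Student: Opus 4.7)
The plan is to apply the $0$-weighted Reilly formula (Proposition \ref{thm:0-reilly}) to a weighted harmonic extension of a first eigenfunction of $\Delta_{f,\Sigma}$, exploiting $f$-minimality to kill the mean-curvature boundary term and the sign flip of $\mathrm{II}_\Sigma$ between the two sides of $\Sigma$ to kill the second-fundamental-form boundary term.

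First I would fix an eigenfunction $z\in C^{\infty}(\Sigma)$ for $\lambda_{1,\Sigma}>0$, so that $\Delta_{f,\Sigma}\,z=-\lambda_{1,\Sigma}\,z$ and $\int_\Sigma z\ \d m_{f,\Sigma}=0$. Since $\Sigma$ is closed, embedded and two-sided (so that a global $\nu$ exists), cutting $M$ along $\Sigma$ produces a compact weighted manifold $\widetilde M$ with $\partial\widetilde M=\Sigma_+\sqcup\Sigma_-$ consisting of two copies of $\Sigma$; if $\Sigma$ actually separates $M$ one may work instead on either component, and the argument goes through identically. Choosing $\nu$ as a fixed unit normal on $\Sigma$, the outward normals on $\Sigma_+$ and $\Sigma_-$ point oppositely, whence $\mathrm{II}_{\Sigma_+}=-\mathrm{II}_{\Sigma_-}$ as tensors on $\Sigma$, while the $f$-minimality hypothesis gives $H_{f,\Sigma_\pm}\equiv 0$. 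Solve the Dirichlet problem $\Delta_f u=0$ on $\widetilde M$ with $u|_{\Sigma_\pm}=z$.

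Next I would feed $u$ into Proposition \ref{thm:0-reilly}. With $\Delta_f u\equiv 0$ the first left-hand term vanishes; discarding the nonpositive contribution $-|\Hess u-(\Delta u/n)\,g|^2$ and invoking $\Ric_f^0\geq Kg$ bounds the left-hand side above by $-K\int_{\widetilde M}|\nabla u|^2\ \d m_f$. On the boundary, the $u_\nu^{\,2}H_{f,\Sigma}$ terms vanish, and the $\mathrm{II}_\Sigma$ integrals on $\Sigma_+$ and $\Sigma_-$ cancel by the orientation flip, leaving
\[
2\int_{\partial\widetilde M}u_\nu\,\Delta_{f,\Sigma}\,z\ \d m_{f,\Sigma}\leq -K\int_{\widetilde M}|\nabla u|^2\ \d m_f.
\]
Substituting $\Delta_{f,\Sigma}\,z=-\lambda_{1,\Sigma}\,z$ and applying Green's identity
\[
\int_{\partial\widetilde M}z\,u_\nu\ \d m_{f,\Sigma}=\int_{\partial\widetilde M}u\,u_\nu\ \d m_{f,\Sigma}=\int_{\widetilde M}|\nabla u|^2\ \d m_f
\]
(valid since $\Delta_f u=0$ and $u|_{\partial\widetilde M}=z$), the displayed inequality becomes $-2\lambda_{1,\Sigma}\int_{\widetilde M}|\nabla u|^2\ \d m_f\leq -K\int_{\widetilde M}|\nabla u|^2\ \d m_f$. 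If the Dirichlet energy vanished then $u$, hence $z=u|_\Sigma$, would be constant, forcing $z\equiv 0$ by the orthogonality $\int_\Sigma z\ \d m_{f,\Sigma}=0$ and contradicting $\lambda_{1,\Sigma}>0$; dividing by this strictly positive quantity gives $2\lambda_{1,\Sigma}\geq K$.

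The main obstacle is the orientation bookkeeping along the cut: one must verify carefully that $\mathrm{II}_{\Sigma_+}$ and $\mathrm{II}_{\Sigma_-}$ truly cancel on $\Sigma$ while the normal derivatives $u_{\nu_\pm}$ from the two boundary copies combine additively through Green's identity, so that the final inequality reduces to a single factor of $\int_{\widetilde M}|\nabla u|^2\ \d m_f$. As a side point, by the monotonicity \eqref{eq:monotonicity} the assumption $\Ric_f^0\geq Kg>0$ forces $\Ric_f^1>0$, so Proposition \ref{thm:1-Frankel-strong} is available and could be used to shortcut the cutting step by asserting that $\Sigma$ already separates $M$; however, the analogous route via the $1$-weighted Reilly formula (Proposition \ref{thm:1-reilly}) appears incompatible with the integration-by-parts step above, as observed in the remark at the end of Section \ref{sec:Steklov eigenvalue estimate}, so the $0$-weighted formula is essential.
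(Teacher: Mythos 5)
Your core computation is correct and runs parallel to the paper's: extend a first eigenfunction $z$ of $\Delta_{f,\Sigma}$ $f$-harmonically, feed it into the $0$-weighted Reilly formula (Proposition \ref{thm:0-reilly}), use $H_{f,\Sigma}=0$ to kill the mean-curvature term, and use Green's identity to turn $\int u_\nu\,\Delta_{f,\Sigma}z$ into the Dirichlet energy. Where you genuinely diverge is the global step. The paper first treats simply connected $M$, where $\Sigma$ is automatically two-sided and separating and is connected by the Frankel property (Proposition \ref{thm:1-Frankel-strong}); it then works on the single component $U_1$ chosen so that $\int_{\partial U_1}\mathrm{II}_{\partial U_1}(\nabla_{\partial U_1} z,\nabla_{\partial U_1} z)\,\d m_{f,\partial U_1}\ge 0$, and reduces general compact $M$ to this case by showing the universal cover is compact via the Wylie--Yeroshkin diameter bound (Proposition \ref{thm:wylie-fundamental-group}). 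Your device of cutting $M$ along $\Sigma$ and applying the Reilly formula on the cut manifold, with the same boundary data on both copies so that the $\mathrm{II}$ integrals cancel under the orientation flip, bypasses the sign-selection trick, the Frankel/separation discussion, and the passage to the universal cover altogether; that is a genuine and attractive simplification.

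Two points need repair. First, the parenthetical claim that when $\Sigma$ separates ``one may work instead on either component, and the argument goes through identically'' is false: on a single component the boundary contains only one copy of $\Sigma$, nothing cancels, and $\int_\Sigma\mathrm{II}(\nabla_\Sigma z,\nabla_\Sigma z)\,\d m_{f,\Sigma}$ may be negative for the component you happen to pick --- this is precisely why the paper chooses the favorable side. The remedy is simply to keep the whole (possibly disconnected) cut manifold, whose two boundary copies restore the cancellation; so drop the parenthetical rather than rely on it. Second, you assume $\Sigma$ is two-sided without justification, whereas the theorem does not hypothesize this and the paper's universal-cover step is exactly what secures a global unit normal (and separation) in general. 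Either appeal to the paper's definition of $f$-minimality, which is phrased via a global unit normal field $\nu$ on $\Sigma$ and thus makes two-sidedness implicit, or note that for one-sided $\Sigma$ the cut boundary is the connected normal double cover $\hat\Sigma\rightarrow\Sigma$, on which the pulled-back data still has vanishing weighted mean curvature and the pointwise sign flip between the two sheets again cancels the $\mathrm{II}$ contribution after integration, so your argument extends with minor wording changes.
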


\subsection{Frankel property}
We examine the so-called Frankel property for weighted minimal hypersurfaces under the positivity of $\Ric^1_f$,
which is a key ingredient for the proof of Theorem \ref{thm:Choi-Wang}.
For its proof,
we use some results concerning the relation between $\Ric^1_f$ and the weighted connection.
Let $(M,g,f)$ be an $n$-dimensional weighted Riemannian manifold,
and let $\varphi \in C^{\infty}(M)$ be another density function.
The \textit{weighted connection} is defined as 
\begin{equation*}
 \nabla^{\varphi}_XY := \nabla_X Y -\d \varphi(X)Y - \d \varphi(Y)X,
\end{equation*}
which is torsion free, affine, and projectively equivalent to the Levi-Civita connection $\nabla$.
The associated \textit{curvature tensor} and \textit{Ricci tensor} are defined as
\begin{equation*}
 R^\varphi(X, Y) Z := \nabla^{\varphi}_X \nabla^{\varphi}_Y Z-\nabla^{\varphi}_Y \nabla^{\varphi}_X Z-\nabla^{\varphi}_{[X, Y]} Z,\quad \Ric^\varphi(Y,Z) := \sum_{i = 1}^n \langle R^\varphi(E_i,Y)Z,E_i \rangle,
\end{equation*}
where $\{E_i\}^n_{i=1}$ is an orthonormal frame.

They are calculated as follows (see e.g., \cite[Proposition 3.3]{WY}):
\begin{lem}[\cite{WY}]
\begin{align}\notag
R^\varphi(X, Y) Z &=R(X,Y)Z+\Hess \varphi(Y,Z)X-\Hess \varphi(X,Z)Y\\ \notag
&\quad +\d\varphi(Y)\d\varphi(Z)X-\d\varphi(X)\d\varphi(Z)Y,\\ \label{eq:connection Ricci}
 \Ric^{\varphi} &=  \Ric + (n-1)\Hess \varphi + (n-1)\d \varphi \otimes \d \varphi.
\end{align}
In particular,
if $X,Y$ are orthogonal,
then
\begin{equation}\label{eq:weighted sectional}
\langle R^\varphi(X, Y) Y,X \rangle=\langle R(X, Y) Y,X \rangle+\nabla^2 \varphi(Y,Y)|X|^2+\d \varphi(Y)^2|X|^2.
\end{equation}
\end{lem}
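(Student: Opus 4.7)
The plan is to compute $R^\varphi(X,Y)Z$ by expanding everything in terms of the Levi-Civita connection $\nabla$ and then collecting antisymmetric pieces. First I would apply the defining formula $\nabla^\varphi_X Y = \nabla_X Y - \d\varphi(X) Y - \d\varphi(Y) X$ twice to obtain a pointwise expression for $\nabla^\varphi_X \nabla^\varphi_Y Z$. In doing so I will repeatedly need the identity
\begin{equation*}
X\bigl(\d\varphi(W)\bigr) = \Hess \varphi(X,W) + \d\varphi(\nabla_X W),
\end{equation*}
which is just the definition of the Hessian, to handle the derivatives of the scalar coefficients $\d\varphi(Y)$ and $\d\varphi(Z)$ that appear when $\nabla_X$ acts on the correction terms of $\nabla^\varphi_Y Z$. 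This produces roughly a dozen terms of three types: one classical piece $\nabla_X \nabla_Y Z$; terms of the shape $\Hess\varphi(\cdot,\cdot)\,\cdot\,$ and $\d\varphi(\nabla_\cdot \cdot)\,\cdot\,$; and quadratic terms of the shape $\d\varphi(\cdot)\d\varphi(\cdot)\,\cdot\,$.

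Next I would form $\nabla^\varphi_X \nabla^\varphi_Y Z - \nabla^\varphi_Y \nabla^\varphi_X Z - \nabla^\varphi_{[X,Y]} Z$ and identify the cancellations. The symmetric Hessian term $\Hess\varphi(X,Y) Z$ drops out by symmetry of $\Hess\varphi$; the scalar pieces containing $\nabla_X Y - \nabla_Y X$ reassemble as $[X,Y]$ and cancel against the corresponding pieces from $\nabla^\varphi_{[X,Y]} Z$; and the mixed terms of the form $\d\varphi(\nabla_X Z) Y$, $\d\varphi(Y)\nabla_X Z$ each appear twice with opposite signs in the two expansions and cancel pairwise. What remains is precisely
\begin{equation*}
R(X,Y)Z + \Hess\varphi(Y,Z)X - \Hess\varphi(X,Z)Y + \d\varphi(Y)\d\varphi(Z)X - \d\varphi(X)\d\varphi(Z)Y,
\end{equation*}
which is the asserted identity.

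The Ricci formula \eqref{eq:connection Ricci} then follows by tracing in the first slot against an orthonormal frame $\{E_i\}_{i=1}^n$. The contribution $\Hess\varphi(Y,Z) E_i$ traces to $n\,\Hess\varphi(Y,Z)$ while the cross term $\Hess\varphi(E_i,Z) Y$ traces to $\Hess\varphi(Y,Z)$, and an analogous pair of traces for the $\d\varphi$-quadratic terms yields $n\,\d\varphi(Y)\d\varphi(Z) - \d\varphi(Y)\d\varphi(Z)$; combining gives $\Ric^{\varphi} = \Ric + (n-1)\Hess\varphi + (n-1)\,\d\varphi\otimes\d\varphi$. Finally, for the sectional identity \eqref{eq:weighted sectional} I would set $Z = Y$ in the tensor formula and pair with $X$: the orthogonality $\langle X,Y\rangle = 0$ kills both terms $\Hess\varphi(X,Y)\langle Y,X\rangle$ and $\d\varphi(X)\d\varphi(Y)\langle Y,X\rangle$, leaving exactly $\langle R(X,Y)Y,X\rangle + \Hess\varphi(Y,Y)|X|^2 + \d\varphi(Y)^2|X|^2$.

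The main obstacle is simply the bookkeeping: the double expansion of $\nabla^\varphi$ produces many terms, and one must carefully track which pieces are antisymmetric in $X,Y$ (hence survive the difference) and which are symmetric (hence cancel). No deep geometric input is required beyond the torsion-free identity $[X,Y] = \nabla_X Y - \nabla_Y X$ and the symmetry of $\Hess\varphi$.
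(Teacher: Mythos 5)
Your proposal is correct: the double expansion of $\nabla^\varphi_X\nabla^\varphi_Y Z$, the cancellations via symmetry of $\Hess\varphi$ and torsion-freeness, and the subsequent trace and orthogonality specializations all check out and yield precisely the stated formulas. The paper offers no proof of this lemma, simply citing Wylie--Yeroshkin (Proposition 3.3 of \cite{WY}), and your direct computation is exactly the standard verification given there, so there is nothing to add.
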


\begin{rem}\label{rem:1-Ricci}
By \eqref{eq:connection Ricci},
one can observe that
if $\varphi=(n-1)^{-1}f$,
then $\Ric^{\varphi}=\Ric^1_f$.
\end{rem}

\begin{rem}\label{rem:weighted sectional}
If $X,Y$ are orthonormal,
then \eqref{eq:weighted sectional} coincides with the weighted sectional curvature $\overline{\sec}_{\varphi}(Y,X)$ introduced by Wylie \cite{W1} (see also \cite{KW}, \cite{KWY}).
\end{rem}

\begin{rem}\label{rem:second-variation-formula}
For a geodesic $\gamma:[0,d]\rightarrow M$,
the \textit{index form} is defined as
\begin{equation*}
I(V, V) :=\int_0^d\left(|V'(t)|^2- \left\langle R(V, \gamma'(t)) \gamma'(t), V\right\rangle \right) \ \d t.
\end{equation*} 
In view of \eqref{eq:weighted sectional},
the index form can be written as follows (see e.g., \cite[Proposition 5.1]{W1}):
If $V$ is perpendicular to $\gamma'$ along $\gamma$,
then
\begin{align*}
    &\quad\,\,I(V, V) \\
    & =\int_0^d\left(\left|V'(t)-\langle \nabla \varphi, \gamma'(t)\rangle V(t)\right|^2- \left\langle R^{\varphi}(V,\gamma'(t))\gamma'(t), V \right\rangle\right) \d t+\left[|V(t)|^2 \langle \nabla \varphi, \gamma'(t)\rangle\right]_{t =0}^{t = d}.
\end{align*}
\end{rem}

Let us show the following Frankel property:
\begin{prop}\label{thm:1-Frankel-strong}
Let $(M,g,f)$ be a complete weighted Riemannian manifold.
We assume ${\Ric_{f}^{1}} > 0$.
Let $\Sigma_1$ and $\Sigma_2$ be two closed embedded $f$-minimal hypersurfaces in $M$.
Then $\Sigma_1$ and $\Sigma_2$ must intersect.
\end{prop}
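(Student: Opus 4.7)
The plan is a contradiction argument in the spirit of Frankel's classical theorem, adapted to the weighted setting by feeding a carefully chosen family of variation fields into the index-form identity of Remark~\ref{rem:second-variation-formula}. The objective is to arrange a sum of second variations of length between $\Sigma_1$ and $\Sigma_2$ whose bulk integrand reassembles into $\Ric^1_f(\gamma',\gamma')$ and whose boundary pieces reassemble into $\pm H_{f,\Sigma_j}$; then the $f$-minimality hypothesis kills the endpoint terms, and $\Ric^1_f>0$ forces a strictly negative total second variation, contradicting the minimizing property of the connecting geodesic.

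I would first assume $\Sigma_1\cap\Sigma_2=\emptyset$ and exploit compactness of $\Sigma_1,\Sigma_2$ together with completeness of $M$ to obtain a unit-speed minimizing geodesic $\gamma:[0,d]\to M$ realizing $d=\mathrm{dist}(\Sigma_1,\Sigma_2)>0$; the standard first variation shows that $\gamma$ meets each $\Sigma_j$ perpendicularly at its endpoint. Orient the normals so that $\nu_1=\gamma'(0)$ and $\nu_2=\gamma'(d)$, fix an orthonormal basis $\{e_i\}_{i=1}^{n-1}$ of $T_{\gamma(0)}\Sigma_1=\gamma'(0)^{\perp}$, and parallel-transport it along $\gamma$ to $\{E_i(t)\}$; then $\{E_i(d)\}$ is automatically an orthonormal basis of $T_{\gamma(d)}\Sigma_2$. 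Setting $\varphi:=f/(n-1)$ (so that $\Ric^{\varphi}=\Ric^1_f$ by Remark~\ref{rem:1-Ricci}), the crucial choice of variation fields is
\[
V_i(t) := \e^{\varphi(\gamma(t))-\varphi(\gamma(0))}\,E_i(t),
\]
which satisfies $\nabla_{\gamma'}V_i=\langle\nabla\varphi,\gamma'\rangle V_i$ and therefore annihilates the squared term in Remark~\ref{rem:second-variation-formula}. Since $V_i(0)\in T_{\gamma(0)}\Sigma_1$ and $V_i(d)\in T_{\gamma(d)}\Sigma_2$, each $V_i$ is realized as the variation field of an admissible variation $\gamma_s^{(i)}$ of $\gamma$ with endpoints constrained to $\Sigma_1$ and $\Sigma_2$.

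Summing the resulting identities for $I(V_i,V_i)$ over $i=1,\dots,n-1$, using $\sum_i\langle R^{\varphi}(E_i,\gamma')\gamma',E_i\rangle=\Ric^1_f(\gamma',\gamma')$ and $(n-1)\langle\nabla\varphi,\gamma'\rangle=\langle\nabla f,\gamma'\rangle$ at each endpoint, and combining with the classical second-variation formula for length under endpoint constraints,
\[
L_i''(0)=I(V_i,V_i)+\mathrm{II}_{\Sigma_1}(V_i(0),V_i(0))-\mathrm{II}_{\Sigma_2}(V_i(d),V_i(d)),
\]
I would arrive at
\[
\sum_{i=1}^{n-1}L_i''(0) = -\int_0^d \e^{2(\varphi(\gamma(t))-\varphi(\gamma(0)))}\,\Ric^1_f(\gamma',\gamma')\,\d t + H_{f,\Sigma_1}\big|_{\nu_1} - \e^{2(\varphi(\gamma(d))-\varphi(\gamma(0)))}\,H_{f,\Sigma_2}\big|_{\nu_2}.
\]
The $f$-minimality of both hypersurfaces discards the two boundary pieces, and $\Ric^1_f>0$ together with $d>0$ yields $\sum_i L_i''(0)<0$. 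This contradicts $L_i''(0)\geq 0$, which holds because $\gamma$ minimizes length among curves from $\Sigma_1$ to $\Sigma_2$.

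The main delicate point is the sign bookkeeping: one must verify that the endpoint contribution $(n-1)\,\e^{2(\varphi-\varphi(0))}\langle\nabla\varphi,\gamma'\rangle$ arising from Remark~\ref{rem:second-variation-formula} interlocks with the $\pm\mathrm{II}_{\Sigma_j}$ terms arising from the length second variation so as to recombine precisely into $\pm(H_{\Sigma_j}-f_{\nu_j})=\pm H_{f,\Sigma_j}$. One also uses implicitly that $H_{f,\Sigma}$ is odd under reversal of the unit normal, so the $f$-minimality condition is orientation-independent and hence compatible with the choices $\nu_1=\gamma'(0),\nu_2=\gamma'(d)$.
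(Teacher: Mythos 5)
Your proposal is correct and follows essentially the same route as the paper: a minimizing geodesic between the (assumed disjoint) hypersurfaces, the conformally rescaled parallel fields $V_i=\e^{\varphi\circ\gamma}E_i$ with $\varphi=f/(n-1)$ (your extra constant factor $\e^{-\varphi(\gamma(0))}$ is harmless), the index-form identity of Remark~\ref{rem:second-variation-formula} killing the squared term, and the recombination of its endpoint terms with the $\mathrm{II}_{\Sigma_j}$ contributions into $\pm H_{f,\Sigma_j}$, which vanish by $f$-minimality, so that $\Ric^1_f>0$ contradicts nonnegativity of the second variation. The sign bookkeeping you flag works out exactly as in the paper's computation.
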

\begin{proof}
We give a proof by contradiction. 
Suppose $\Sigma_1 \cap \Sigma_2 = \emptyset$.
Let $\gamma : [0,d]\rightarrow M$ be a unit speed minimal geodesic from $\Sigma_1$ to $\Sigma_2$. 
Notice that $\gamma$ intersects with $\Sigma_1$ and $\Sigma_2$ orthogonally in virtue of the first variation formula.
Let $\{e_i\}^n_{i=1}$ be an orthonormal basis of $T_{\gamma(0)} M$ such that $e_n = \gamma'(0)$.
For each $i=1,\dots,n-1$, 
let $E_i$ stand for the parallel vector field along $\gamma$ with $E_i(0) = e_i$.
We set $\varphi:=(n-1)^{-1}f$ and $V_i := \e^{\varphi \circ \gamma}\,E_i$. 
Furthermore, 
let $\overline{\gamma}_i : [0,d] \times (-\epsilon,\epsilon)\rightarrow M$ be a variation of $\gamma$ whose variational vector field is $V_i$.
From the second variation formula and Remarks \ref{rem:1-Ricci} and \ref{rem:second-variation-formula},
it follows that
\begin{align*}
    0&\leq \sum_{i = 1}^{n-1}\int_0^d\left(|V'_i(t)|^2- \left\langle R(V_i, \gamma'(t)) \gamma'(t), V_i\right\rangle \right) \ \d t\\
    &\quad \,\,+ \sum_{i = 1}^{n-1} \left( -\mathrm{II}_{\Sigma_2}(V_i(d),V_i(d))+ \mathrm{II}_{\Sigma_1}(V_i(0),V_i(0))\right)\\
    & = \sum_{i=1}^{n-1}  \int_0^d \left(\left|V_i'(t)-  \langle \nabla \varphi, \gamma'(t)\rangle V_i(t)\right|^2- \left\langle R^{\varphi}(V_i,\gamma'(t))\gamma'(t),V_i \right\rangle  \right)\ \d t\nonumber\\
    &\quad\,\, +  \sum_{i = 1}^{n-1}\left[ |V_i(t)|^2 \langle \nabla \varphi, \gamma'(t)\rangle\right]_{t = 0}^{t = d} + \left( -\e^{2\varphi(\gamma(d))}\,H_{\Sigma_2}(\gamma(d))+\e^{2\varphi(\gamma(0))}\,H_{\Sigma_1}(\gamma(0))    \right)\nonumber\\
    &= -\int_0^d  \e^{2\varphi(\gamma(t))} \Ric_f^1(\gamma'(t),\gamma'(t))\ \d t+ \left( -\e^{2\varphi(\gamma(d))}\,H_{f,\Sigma_2}(\gamma(d))+\e^{2\varphi(\gamma(0))}\,H_{f,\Sigma_1}(\gamma(0))    \right)\\
    &= -\int_0^d  \e^{2\varphi(\gamma(t))} \Ric_f^1(\gamma'(t),\gamma'(t))\ \d t.
\end{align*}
Here,
$\mathrm{II}_{\Sigma_1},H_{\Sigma_1}(\gamma(0)),H_{f, \Sigma_1}(\gamma(0))$ are the second fundamental form, mean curvature, $f$-mean curvature for $\Sigma_1$ at $\gamma(0)$ with respect to $\gamma'(0)$,
and $\mathrm{II}_{\Sigma_2},H_{\Sigma_2}(\gamma(0)),H_{f, \Sigma_2}(\gamma(0))$ are those for $\Sigma_2$ at $\gamma(d)$ with respect to $\gamma'(d)$.
Now,
this contradicts with the positivity of $\Ric^1_f$.
Therefore, we arrive at the desired assertion.
\end{proof}

\begin{rem}
Wei-Wylie \cite{WW} have obtained the Frankel property under a stronger condition $\Ric^{\infty}_f>0$ (see \cite[Theorem 7.4]{WW}, and also \cite[Lemma 7.5]{LW}).
On the other hand,
Kennerd-Wylie \cite{KW} have proved a similar result for totally geodesic submanifolds in weighted manifolds of positive weighted sectional curvature (see \cite[Theorem 7.1]{KW}).
\end{rem}

\begin{rem}
It is well-known that
the Frankel property can be also derived from the Reilly formula when the ambient space is compact (see e.g., \cite[Lemma 5]{LW}).
We can derive Proposition \ref{thm:1-Frankel-strong} from Proposition \ref{thm:1-reilly} under the compactness.
Indeed,
along the line of the argument of the proof of \cite[Lemma 5]{LW},
it suffices to apply Proposition \ref{thm:1-reilly} to a suitable $(n/(n-1))f$-harmonic function instead of applying the classical weighted Reilly formula (\cite[(7)]{LW}) to a suitable $f$-harmonic function.
\end{rem}

\subsection{Proof of Theorem \ref{thm:Choi-Wang}}\label{subsec:Choi-Wang}
In this subsection,
let us give a proof of Theorem \ref{thm:Choi-Wang}.
We first recall a diameter bound obtained by Wylie-Yeroshkin \cite{WY}.
For an $n$-dimensional weighted Riemannian manifold $(M,g,f)$,
we consider a conformally deformed metric
\begin{equation*}
g_{f}:=\e^{-\frac{4f}{n-1}}g.
\end{equation*}
We possess the following (see \cite[Theorem 2.2]{WY}):
\begin{prop}[\cite{WY}]\label{thm:wylie-fundamental-group}
Let $(M,g,f)$ be an $n$-dimensional complete weighted Riemannian manifold. 
For $\kappa > 0$, 
we assume $\Ric_f^1 \geq (n-1)\kappa \,\e^{-\frac{4f}{n-1}}g$.
Then the diameter of $M$ induced from the metric $g_f$ is bounded from above by $\pi/\sqrt{\kappa}$.
\end{prop}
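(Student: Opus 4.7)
The plan is a Myers-type second-variation argument executed in the weighted connection framework of Wylie--Yeroshkin, with test vector fields rescaled by the conformal factor so that the $g_f$--arc length becomes the natural parameter. Set $\varphi := f/(n-1)$, so that by Remark \ref{rem:1-Ricci} we have $\Ric^{\varphi} = \Ric^1_f$, and the hypothesis reads $\Ric^{\varphi}(\cdot,\cdot) \geq (n-1)\kappa\,\e^{-4\varphi}\,g(\cdot,\cdot)$. Note that $g_f = \e^{-4\varphi}g$, so for a unit-speed $g$-geodesic $\gamma$ the $g_f$--arc length parameter is $s(t)=\int_0^t \e^{-2\varphi(\gamma(\tau))}\,\d\tau$.

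Suppose for contradiction that $\diam(M,g_f) > \pi/\sqrt{\kappa}$, and pick $p,q\in M$ with $d_{g_f}(p,q) > \pi/\sqrt{\kappa}$. Since $(M,g)$ is complete, take a unit-speed $g$-minimizing geodesic $\gamma:[0,d]\to M$ from $p$ to $q$, and set $L := s(d) = L_{g_f}(\gamma) \geq d_{g_f}(p,q) > \pi/\sqrt{\kappa}$. Let $\{E_i\}_{i=1}^{n-1}$ be parallel (with respect to $\nabla$) orthonormal vector fields along $\gamma$, perpendicular to $\gamma'$, and define the test vector fields
\begin{equation*}
V_i(t) := \phi(s(t))\,\e^{\varphi(\gamma(t))}\,E_i(t), \qquad \phi(s) := \sin(\pi s/L),
\end{equation*}
which vanish at $t=0$ and $t=d$. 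A direct computation using $s'(t)=\e^{-2\varphi}$ gives
\begin{equation*}
V_i'(t) - \langle \nabla \varphi,\gamma'(t)\rangle V_i(t) = \phi'(s(t))\,\e^{-\varphi}\,E_i(t),
\end{equation*}
so $\sum_i |V_i' - \langle \nabla\varphi,\gamma'\rangle V_i|^2 = (n-1)\phi'(s)^2 \e^{-2\varphi}$. Also $\sum_i \langle R^{\varphi}(V_i,\gamma')\gamma',V_i\rangle = \phi(s)^2\e^{2\varphi}\,\Ric^{\varphi}(\gamma',\gamma')$ (using that $\langle R^{\varphi}(\gamma',\gamma')\gamma',\gamma'\rangle = 0$ from the explicit formula for $R^{\varphi}$).

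Applying the index form identity of Remark \ref{rem:second-variation-formula} to each $V_i$ (the boundary term drops out), summing, and using the curvature hypothesis together with the change of variables $\d s = \e^{-2\varphi}\d t$, I would obtain
\begin{equation*}
\sum_{i=1}^{n-1} I(V_i,V_i) \leq (n-1)\int_0^L \!\left[\phi'(s)^2 - \kappa\,\phi(s)^2\right]\d s = \frac{(n-1)L}{2}\!\left(\frac{\pi^2}{L^2} - \kappa\right) < 0,
\end{equation*}
the last inequality being precisely where $L > \pi/\sqrt{\kappa}$ is used. On the other hand, since $\gamma$ is $g$-minimizing and each $V_i$ vanishes at the endpoints, $I(V_i,V_i)\geq 0$ for all $i$, yielding a contradiction and completing the proof.

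The routine algebra is the $V_i'$ computation and the change of variables; the conceptual step which I view as the main point is the choice of the conformal rescaling $V_i = \phi(s)\e^{\varphi}E_i$ and the use of $s$ as the argument of $\phi$. That choice is what forces the factor $\e^{-\varphi}$ in $V_i' - \langle\nabla\varphi,\gamma'\rangle V_i$ to cancel the factor $\e^{2\varphi}$ on the Ricci side, converting the hypothesis $\Ric^1_f \geq (n-1)\kappa\,\e^{-4\varphi}g$ into a genuine $g_f$--scale Myers inequality after the substitution $\d s = \e^{-2\varphi}\d t$. Once this alignment is in place the remainder of the argument is the classical Myers contradiction.
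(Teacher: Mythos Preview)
Your argument is correct. The paper does not actually prove Proposition~\ref{thm:wylie-fundamental-group}; it simply quotes it from \cite{WY} (see \cite[Theorem 2.2]{WY}) and moves on. So there is no in-paper proof to compare against, but your approach is precisely the Myers-type argument that the surrounding machinery (the weighted connection $\nabla^{\varphi}$, Remark~\ref{rem:1-Ricci}, and especially the index-form identity of Remark~\ref{rem:second-variation-formula}) is designed to support, and it is essentially the argument in \cite{WY}. The computations check out: with $V_i = \phi(s)\e^{\varphi}E_i$ one has $V_i' - \langle\nabla\varphi,\gamma'\rangle V_i = \phi'(s)\e^{-\varphi}E_i$, the boundary term in Remark~\ref{rem:second-variation-formula} vanishes because $\phi(0)=\phi(L)=0$, the curvature hypothesis turns $\e^{2\varphi}\Ric^{\varphi}(\gamma',\gamma')$ into $(n-1)\kappa\,\e^{-2\varphi}$, and the change of variables $\d s = \e^{-2\varphi}\d t$ reduces everything to the classical one-dimensional inequality $\int_0^L(\phi'^2-\kappa\phi^2)\,\d s < 0$ for $L>\pi/\sqrt{\kappa}$. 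The only point worth saying explicitly is that $\gamma$ is a $g$-minimizing geodesic (not a $g_f$-geodesic), so the non-negativity of the index form is the standard Riemannian second-variation fact for $(M,g)$; you do state this, and your use of $L_{g_f}(\gamma)\geq d_{g_f}(p,q)$ is exactly right.
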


We are now in a position to prove Theorem \ref{thm:Choi-Wang}.
\begin{proof}[Proof of Theorem \ref{thm:Choi-Wang}]
We first prove the assertion in the case where $M$ is simply connected.
In this case,
$M$ and $\Sigma$ are orientable since $\Sigma$ is a closed embedded hypersurface.
Due to Proposition \ref{thm:1-Frankel-strong} together with the argument of \cite[Lemma 6]{LW}, 
$\Sigma$ is connected,
and it divides $M$ into two components $U_1$ and $U_2$.
Let $z$ be an eigenfunction for $\lambda_{1,\Sigma}$.
We may assume
\begin{equation*}
    \int_{\partial U_1} \mathrm{II}_{\partial U_1}(\nabla_{\partial U_1}\, z,\nabla_{\partial U_1}\, z)\ \d m_{f,\partial U_1} \geq 0.
\end{equation*}
Let $u$ be the solution to the following boundary value problem:
\begin{equation*}\label{eq:Choi-Wang-boundary-problem-1}
    \begin{cases}
        \Delta_f u = 0 & \mbox{ on }U_1,\\
        u = z & \mbox{ on }\partial U_1.
    \end{cases}
\end{equation*}
Applying Proposition \ref{thm:0-reilly} to $u$ on $U_1$, we derive
\begin{align}\label{eq:Choi-Wang-1}
    0&\geq K \int_{U_1} |\nabla u|^2 \ \d m_f - 2\lambda_{1,\Sigma}\int_{\partial U_1} u_\nu \, z \ \d m_{f,\partial U_1} + \int_{\partial U_1} \mathrm{II}_{\partial U_1}(\nabla_{\partial U_1} \, z, \nabla_{\partial U_1} \, z)\ \d m_{f,\partial U_1}\\ \notag
    &\geq (K- 2\lambda_{1,\Sigma}) \int_{U_1} |\nabla u|^2 \ \d m_f,
\end{align}
where $\nu$ denotes the outer unit normal vector field on $\partial U_1$.
This yields
\begin{equation*}
    \lambda_{1,\Sigma} \geq \frac{K}{2}.
\end{equation*}
 
We next consider the case where $M$ is not simply connected. 
Note that
$M$ satisfies
\begin{equation}\label{eq:WY curvature}
    \Ric_f^1 \geq \Ric_f^0 \geq K \,g \geq (K\e^{\frac{4\,\inf f}{n-1}})\,\e^{-\frac{4f}{n-1}}\,g
\end{equation}
in view of the compactness of $M$;
in particular,
Proposition \ref{thm:wylie-fundamental-group} can be applied.
Let $\overline{M}$ stand for the universal covering with covering map $\pi:\overline{M}\rightarrow M$.
Set
\begin{equation*}
\bar{g}:= \pi^{\ast}g,\quad \bar{f} := \pi^{\ast}f,\quad \bar{g}_f:= \pi^{\ast}g_f=\e^{-\frac{4\bar{f}}{n-1}}\bar{g}.
\end{equation*}
The bound \eqref{eq:WY curvature} can be lifted over $(\overline{M},\bar{g},\bar{f})$,
and also $(\overline{M},\bar{g}_f)$ is complete by the compactness of $M$.
Therefore,
by Proposition \ref{thm:wylie-fundamental-group},
$\overline{M}$ is compact;
in particular,
the fundamental group of $M$ is finite.
Now,
the lift $\overline{\Sigma}$ of $\Sigma$ is a closed embedded $\bar{f}$-minimal hypersurface in $\overline{M}$.
The consequence for simply connected manifolds observed in the above paragraph tells us that
the first non-zero eigenvalue $\lambda_{1,\overline{\Sigma}}$ of $\Delta_{\overline{f},\overline{\Sigma}}$ satisfies
\begin{equation*}
\lambda_{1,\Sigma}\geq \lambda_{1,\overline{\Sigma}} \geq \frac{K}{2}.
\end{equation*}
Thus,
we complete the proof.
\end{proof}

\begin{rem}
Choi-Wang type estimate has been investigated in \cite{LW}, \cite{CMZ} under a stronger assumption $\Ric^{\infty}_f\geq Kg$ (see \cite[Theorem 7]{LW}, \cite[Theorem 2]{CMZ}).
The setting of Theorem \ref{thm:Choi-Wang} corresponds to that in \cite{LW} in the sense that the ambient space is assumed to be compact.
In \cite{CMZ},
the ambient space is assumed to be complete,
and two hypersurfaces are assumed to be contained in a bounded convex domain.
Modifying the above proof along the line of \cite{CMZ},
one can deduce the estimate in Theorem \ref{thm:Choi-Wang} under the same setting as in \cite{CMZ}.
\end{rem}

\section{ABP estimate}\label{sec:ABP}

This section is devoted to the study of the ABP estimate and its application on a complete weighted Riemannian manifold $(M,g,f)$.
The aim of the ABP estimate to estimate the size of \textit{contact sets}.
We recall the formulation of the contact set on manifolds suggested by Cabr\'{e} \cite{C},
which uses the square of distance functions as touching functions.
For a positive constant $a> 0$, 
a compact subset $E$, 
a bounded domain $U$ and $u\in C(\overline{U})$, 
the \textit{contact set} $A_a(E,U,u)$ is defined by the set of all $x\in \overline{U}$ such that
\begin{equation*}
 \inf_{\overline{U}}\left(u + \frac{a}{2} d_y^2\right) = u(x) + \frac{a}{2} d^2_y(x)
\end{equation*}
for some $y\in E$, where $d_y(x) := d(x,y)$.
For $\kappa \in \mathbb{R}$, 
we denote by $s_{\kappa}(t)$ a unique solution to the Jacobi equation $\psi''(t)+\kappa\,\psi(t)=0$ with $\psi(0)=0$ and $\psi'(0)=1$.
We set
\begin{equation*}
\mathcal{S}_\kappa (t):=\frac{s_{\kappa}(t)}{t},\quad \mathcal{H}_\kappa(t):=t\,\frac{s'_{\kappa}(t)}{s_{\kappa}(t)}
\end{equation*}
such that $\mathcal{S}_\kappa (0)=1$ and $\mathcal{H}_\kappa (0)=1$.

The third main result is the following ABP estimate:
\begin{thm}\label{thm:ABP}
Let $(M,g,f)$ be an $n$-dimensional  complete weighted Riemannian manifold. 
For $\kappa \in \mathbb{R}$, 
we assume $\Ric_f^0   \geq  n\,\kappa \,\e^{-\frac{4f}{n} } \,g$.
For a positive constant $a> 0$, 
a compact subset $E$, 
a bounded domain $U$ and $u\in C^2(\overline{U})$, 
we assume that $A_a(E,U ,u)$ is contained in $U$. 
Then
\begin{equation*}
    m_f(E) \leq \int_{A_a(E,U ,u)} \left\{\mathcal{S}_\kappa\left(\frac{1}{a}s_{a,f,u}   \,|\nabla u| \right) \left( \mathcal{H}_\kappa \left( \frac{1}{a}s_{a,f,u} \,|\nabla u|  \right) +\frac{1 }{na}\e^{\frac{2f}{n}}\,s_{a,f,u}\,\Delta_f u  \right)\right\}^n \ \d m_f, 
\end{equation*}
where $s_{a,f,u}:\overline{U}\to \mathbb{R}$ is a function defined by
\begin{equation*}
    s_{a,f,u}(x) :=  \int_0^1 \exp\left\{ -\frac{2}{n} f\left(\exp_x\left( \frac{\xi (\nabla u)_x}{a} \right)\right) \right\} \d\xi.
\end{equation*}
\end{thm}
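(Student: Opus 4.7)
The plan is to follow the Cabré-Wang-Zhang scheme: push the contact set forward to $E$ via the map $\Phi_a(x):=\exp_x(\nabla u(x)/a)$, and compare the resulting Jacobian with the model in constant curvature $\kappa$ after an affine reparametrization of geodesics adapted to the conformal weight $\e^{-4f/n}g$ in the hypothesis.

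First I would establish that $\Phi_a$ maps $A_a(E,U,u)$ onto $E$. For fixed $y\in E$, the continuous function $u+\tfrac{a}{2}d_y^2$ attains its minimum on $\overline{U}$ at some point $x$, which is interior by the assumption $A_a(E,U,u)\subset U$, so the first- and second-order optimality conditions give $\nabla u(x)=a\exp_x^{-1}(y)$ and $\Hess u(x)+a\Hess(\tfrac{1}{2}d_y^2)(x)\geq 0$. In particular $y=\Phi_a(x)$, and the area formula together with $\d m_f=\e^{-f}\d v_g$ yields
$$
m_f(E)\leq\int_{A_a(E,U,u)}|\det \d\Phi_a(x)|\,\e^{f(x)-f(\Phi_a(x))}\,\d m_f(x),
$$
reducing the theorem to a pointwise bound on the integrand.

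Next I would compute $|\det\d\Phi_a(x)|$ by Jacobi-field analysis along $\gamma(t):=\exp_x(t\,v)$ with $v:=\nabla u(x)/a$. If $J_i$ is the Jacobi field with $J_i(0)=e_i$ and $J_i'(0)=a^{-1}\nabla_{e_i}\nabla u$ for an orthonormal frame $\{e_i\}$ at $x$, then $\d\Phi_a(e_i)=J_i(1)$, so the determinant is the volume of the parallelepiped spanned by $\{J_i(1)\}$. Following the reparametrization philosophy of Wylie-Yeroshkin and the $1$-weighted analogue of Kuwae-Li, I would pass to the new affine parameter $s:=\int_0^t\e^{-2f(\gamma(\xi))/n}|\dot\gamma(\xi)|\,\d\xi$, whose total value at $t=1$ equals $\sigma:=s_{a,f,u}(x)|\nabla u(x)|/a$, and study the conformally rescaled shape operator. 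Under $\Ric_f^0\geq n\kappa\,\e^{-4f/n}g$, the matrix Riccati inequality encoded in Proposition \ref{thm:0-bochner} shows that in the new parameter the transverse Jacobi fields are dominated by $s_\kappa(s)$; this produces the factor $\mathcal{S}_\kappa(\sigma)^n$ and isolates the radial trace contribution $\mathcal{H}_\kappa(\sigma)$.

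The main obstacle will be the final arithmetic-geometric mean step, which must combine the initial-velocity trace $\tr J'(0)=a^{-1}\Delta u$ with the endpoint factor $\e^{f(x)-f(\Phi_a(x))}$ and the radial contribution so that the surviving drift collapses precisely into $\Delta_f u$ (rather than $\Delta u$) multiplied by the conformal weight $\e^{2f/n}s_{a,f,u}$. Carrying this bookkeeping along $\gamma$ is exactly what produces the average $s_{a,f,u}(x)$; once done, AM-GM on the $n$ eigenvalues of the rescaled Jacobi matrix yields the pointwise inequality
$$
|\det\d\Phi_a(x)|\,\e^{f(x)-f(\Phi_a(x))}\leq\left\{\mathcal{S}_\kappa(\sigma)\left(\mathcal{H}_\kappa(\sigma)+\frac{\e^{2f(x)/n}s_{a,f,u}(x)\Delta_f u(x)}{na}\right)\right\}^n,
$$
and integrating against $\d m_f$ on $A_a(E,U,u)$ completes the proof.
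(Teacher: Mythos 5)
Your skeleton matches the paper's: surjectivity of $\Phi_a$ onto $E$ from the contact set, the area formula with the weighted densities, and a comparison after the Wylie--Yeroshkin type reparametrization $s_f(t)=\int_0^t \e^{-2f(\gamma(\xi))/n}\,\d\xi$, whose value at $t=1$ is exactly $s_{a,f,u}(x)$. The gap is in the mechanism you propose for the Jacobian bound. Under a hypothesis only on $\Ric^0_f$ (a trace-type bound) you cannot dominate individual ``transverse'' Jacobi fields by $s_\kappa$; moreover here $J_i(0)=e_i\neq 0$, so $s_\kappa$-domination is not even the right model, and the final bound is not of a product form ``transverse factors times a radial factor''. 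Likewise, the concluding step you invoke --- AM--GM on the eigenvalues of the rescaled Jacobi matrix at the endpoint --- cannot yield the stated pointwise inequality: a Ricci-type lower bound gives no pointwise control of the eigenvalues or the trace of $d\Phi_a$, only of its determinant, and only through a second-order ODE comparison carried along the whole geodesic.

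What closes the argument (and is what the paper does) is scalar from the start: set $l(t)=\log\det(d\Phi^t_a)_x$ and use the traced Riccati inequality $l''\le -l'^2/n-\Ric(\gamma',\gamma')$, which is legitimate because the initial Riccati matrix $\Hess u/a$ is symmetric so Cauchy--Schwarz applies; then incorporate the weight along the geodesic via $l_f(t)=l(t)-f(\gamma(t))+f(x)$, giving $l_f''\le -l_f'^2/n-\tfrac{2}{n}\,l_f'\,\langle\nabla f,\gamma'\rangle-\Ric^0_f(\gamma',\gamma')$ (this is where the $N=0$ tensor appears, via $\Ric_f^\infty+\tfrac1n\,\d f\otimes \d f=\Ric_f^0$). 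The reparametrization by $s_f$ absorbs the drift term and converts the curvature hypothesis into the constant $-\tfrac{n\kappa}{a^2}|(\nabla u)_x|^2$, and one then applies ODE comparison to $\widehat J_f=\e^{\widehat l_f/n}$, which satisfies $\widehat J_f''\le -\tfrac{\kappa}{a^2}|(\nabla u)_x|^2\,\widehat J_f$ with $\widehat J_f(0)=1$, $\widehat J_f'(0)=\e^{2f(x)/n}\Delta_f u(x)/(na)$, against the explicit model $J_\kappa(s)=\mathcal S_\kappa\bigl(\tfrac{|\nabla u|}{a}s\bigr)\bigl(\mathcal H_\kappa\bigl(\tfrac{|\nabla u|}{a}s\bigr)+\tfrac{\e^{2f(x)/n}\Delta_f u}{na}s\bigr)$; evaluating at $s=s_{a,f,u}(x)$ produces the factor $\mathcal S_\kappa(\mathcal H_\kappa+\cdot)$ in the statement. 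A smaller point: Proposition \ref{thm:0-bochner} is a Bochner identity for functions on $M$ and is not what supplies the Riccati inequality along $\Phi^t_a$; the latter is the standard Jacobian determinant estimate (Villani, Wang--Zhang), with the weighted structure entering only through the manipulation of $l_f$ above.
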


\subsection{Proof of Theorem \ref{thm:ABP}}\label{subsec:ABP}

We prove the desired estimate along the line of the argument of the proof of \cite[Theorem 1.2]{WZ}.
The key step is to yield a Riccati type inequality associated with the $0$-weighted Ricci curvature,
which enables us to adopt the argument in \cite{WZ} to our setting (see \eqref{eq:ABP-2} below).

Set $A_a:=A_a(E,U ,u)$.
We define a map $\Phi_{a} : A_a \to M$ by
\begin{equation*}
\Phi_{a}(x) := \exp_x\left(\frac{(\nabla u)_x}{a}\right).
\end{equation*}
By the same argument as in the proof of \cite[Theorem 1.2]{WZ},
the map $\Phi_{a}$ is differentiable, and it maps $A_a$ onto $E$.
For $t\in [0,1]$, we also define a map $\Phi_{a}^t : A_a \to M$ by
\begin{equation*}
\Phi_{a}^t ( x):=\exp_x\left( \frac{t(\nabla u)_x}{a} \right).
\end{equation*}
Furthermore, for $t \in [0,1]$ and $x \in A_a$, we set 
\begin{equation*}
 l(t,x) := \log  (\det (d\Phi_{a}^t)_x),\quad l_f(t,x) := l(t,x)  - f(\Phi_{a}^t(x)) + f(x), \quad J_f(t,x) := \exp \left(\frac{l_f (t,x) }{n} \right),
\end{equation*}
which are well-defined (see \cite[Theorem 1.2]{WZ}).

We fix $x\in A_a$,
and write
\begin{equation*}
\gamma(t) := \Phi_{a}^t(x),\quad l(t) := l(t,x),\quad l_f(t) := l_f(t,x),\quad J_f(t) := J_f(t,x).
\end{equation*}
Note that it enjoys the following Riccati inequality (see e.g., \cite[Chapter 14]{V}, \cite[Theorem 1.2]{WZ}):
\begin{equation*}
    l''(t) \leq -\frac{l'(t)^2}{n} - \Ric(\gamma'(t),\gamma'(t)).
\end{equation*}
It follows that
\begin{equation}\label{eq:ABP-1}
    l_f''(t)\leq -\frac{l'(t)^2}{n} - \Ric_f^{\infty}\left(\gamma'(t),\gamma'(t)\right)
    = -\frac{l_f'(t)^2}{n} - \frac{2}{n} l_f'(t) \left\langle \nabla f , \gamma'(t) \right\rangle   - \Ric_f^{0}\left(\gamma'(t),\gamma'(t)\right).
\end{equation}
Let $s_f:[0,1]\to \mathbb{R}$ be a function defined by
\begin{equation*}
    s_{f}(t)  := \int_0^t \exp\left( -\frac{2f(\gamma(\xi))}{n} \right) \d \xi,
\end{equation*}
and let $t_f:[0,s_f(1)]\to [0,1]$ be its inverse function. 
For $\widehat{l}_f(s) := l_f(t_f(s))$,  
the estimate \eqref{eq:ABP-1} and the curvature assumption lead us to
\begin{equation}\label{eq:ABP-2}
    \widehat{l}_f''(s)\leq -\frac{\widehat{l}_f'(s)^2}{n}- \Ric_f^0\left(\gamma'(t_f(s)),\gamma'(t_f(s))\right)\exp\left( \frac{4f(\gamma(t_f(s)))}{n} \right)\leq - \frac{\widehat{l}_f'(s)^2}{n} - \frac{n \kappa}{a^2}  |(\nabla u)_x|^2;
\end{equation}
in particular,
for $\widehat{J}_f(s) := J_f(t_f(s))$,
we conclude
\begin{equation*}
    \widehat{J}_f''(s) \leq - \frac{\kappa}{a^2}  |(\nabla u)_x|^2 \widehat{J}_f(s),\quad \widehat{J}_f  (0) = 1,\quad \widehat{J}_f'  (0)=  \frac{\e^{\frac{2f(x)}{n}} \Delta_f u(x)}{na}.
\end{equation*}
We define
\begin{align*}
    J_\kappa(s) := \mathcal{S}_\kappa\left( \frac{ |(\nabla u)_x|}{a}     \,s \right) \left( \mathcal{H}_\kappa\left(\frac{|(\nabla u)_x| }{a}  \, s \right) +\frac{\e^{\frac{2f(x)}{n}} \Delta_f u (x) }{na} \, s \right).
\end{align*}
By straightforward calculations, we have
\begin{equation*}
 J_\kappa''(s) = -\frac{\kappa}{a^2} |(\nabla u)_x|^2 \, J_\kappa(s),\quad   \lim_{s\to 0} J_\kappa(s) = 1, \quad \lim_{s \to 0}J_\kappa'(s) = \frac{\e^{\frac{2f(x)}{n}} \Delta_f u(x)}{na}.
\end{equation*}
Applying the ODE comparison to $\widehat{J}_f$ and $J_\kappa$,
we see
\begin{equation*}
 J_f(1,x) = \widehat{J}_f(s_f(1)) \leq J_\kappa(s_f(1)).
\end{equation*}
This together with the change of variable, 
we arrive at
\begin{align*}
    m_f(E)
    &\leq \int_{A_a} \e^{-f(\Phi_{a}(x))} |\det (d \Phi_{a})_x)|\ \d v_g(x)\label{eq:ABP-4}\\
    &= \int_{A_a} J_f(1,x)^n \ \d m_f (x)\nonumber\\
    &\leq  \int_{A_a} \left\{\mathcal{S}_\kappa\left(\frac{1}{a}\,s_{a,f,u} \,|\nabla u|  \right) \left( \mathcal{H}_\kappa \left( \frac{ 1}{a}s_{a,f,u} |\nabla u| \right) +\frac{1 }{na}\,\e^{\frac{2f}{n}} \, s_{a,f,u}\, \Delta_f u \right)\right\}^n \d m_f\nonumber,
\end{align*}
where we used the surjectivity of $\Phi_{a}$ in the first inequality.
This completes the proof.
\qed

\subsection{Harnack inequality}
By Theorem \ref{thm:ABP},
we can obtain a Krylov-Safonov type Harnack inequality under the boundedness of the density function.
We will give an outline of its proof along a standard argument in previous works (cf. \cite[Theorem 2.1]{C}, \cite[Theorem 1.1]{Kim}, \cite[Theorems 1.4, 1.5]{WZ}, \cite[Theorems 1.3, 1.4 and 1.5]{WZ2}).

We first recall the following Laplacian and volume comparison estimates established by Kuwae-Li \cite{KL} (see \cite[Theorem 2.2 and Lemma 6.1]{KL}):
\begin{prop}[\cite{KL}]\label{thm:Laplacian}
Let $(M, g,f)$ be an $n$-dimensional complete weighted Riemannian manifold. 
For $\kappa \leq 0$ and $b \geq  0$, 
we assume $\Ric_f^0 \geq n \,\kappa\, \mathrm{e}^{-\frac{4f}{n}} \,g$ and $|f| \leq b$.
Let $o\in M$.
Then
\begin{align*}
\Delta_f d_o \leq \frac{n\, \e^{\frac{4b}{n}}}{d_o} \mathcal{H}_\kappa\left( \e^{-\frac{2b}{n}} \, d_o \right)
\end{align*}
outside $\{o\} \cup \mathrm{Cut}(o)$,
where $\mathrm{Cut}(o)$ denotes the cut locus of $o$.
\end{prop}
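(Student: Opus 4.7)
The plan is to derive a clean autonomous Riccati-type inequality for a properly normalized version of $\Delta_f d_o$ along a radial geodesic, and then invoke an ODE comparison with an explicit model solution. Fix $p \notin \{o\}\cup\mathrm{Cut}(o)$ and let $\gamma:[0,r]\to M$ be the unit-speed minimizing geodesic from $o$ to $p$, with $r = d_o(p)$, so $d_o$ is smooth near $\gamma$ with $|\nabla d_o|\equiv 1$. Applying the $N=0$ Bochner identity of Proposition \ref{thm:0-bochner} to $\varphi=d_o$, the left-hand side vanishes; dropping the nonnegative term $|\Hess d_o - (\Delta d_o)/n\,g|^2$ together with the curvature hypothesis yields, along $\gamma$,
\[
\e^{-\frac{2f}{n}}\bigl(\e^{\frac{2f}{n}}\Delta_f d_o\bigr)'\;\le\;-n\kappa\,\e^{-\frac{4f}{n}}-\frac{(\Delta_f d_o)^2}{n},
\]
where the prime denotes the $t$-derivative along $\gamma$.

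The extra divergence term in the $N=0$ Bochner identity is tailor-made for the normalization $\Phi(t):=\e^{\frac{2f(\gamma(t))}{n}}\Delta_f d_o(\gamma(t))$ paired with the reparametrization $s(t):=\int_0^t \e^{-\frac{2f(\gamma(\tau))}{n}}\,\d\tau$. Setting $H(s):=\Phi(t(s))$ and noting $H'(s)=\e^{\frac{2f}{n}}\Phi'(t)$, the preceding display becomes the clean autonomous Riccati inequality
\[
H'(s)+\frac{H(s)^2}{n}+n\kappa\;\le\;0.
\]
The exponents $2/n$ and $4/n$ appearing in the normalization, the time change, and the curvature hypothesis are calibrated precisely so that all $f$-dependent weights cancel and no residual cross terms involving $(f\circ\gamma)'$ survive.

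A standard near-origin expansion gives $\Delta_f d_o(\gamma(t))\sim(n-1)/t$ as $t\to 0^+$, hence $sH(s)\to n-1$ as $s\to 0^+$. The reference ODE $G'(s)+G(s)^2/n+n\kappa=0$ with asymptotic $sG(s)\to n$ is explicitly solved by $G(s)=n\,s'_\kappa(s)/s_\kappa(s)=n\,\mathcal{H}_\kappa(s)/s$; since $H(s)-G(s)\to -\infty$ as $s\to 0^+$, a standard Sturm-type comparison based on $(H-G)'\le -(H+G)(H-G)/n$ yields $H(s)\le n\,\mathcal{H}_\kappa(s)/s$ on the whole interval of existence. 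Reverting, $\Delta_f d_o(\gamma(t))=\e^{-\frac{2f}{n}}H(s(t))$; using $|f|\le b$ together with the monotonicity of $u\mapsto s'_\kappa(u)/s_\kappa(u)$ (nonincreasing for $\kappa\le 0$) and the bound $s(t)\ge \e^{-\frac{2b}{n}}t$ yields the stated estimate.

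The main obstacle is the calibration step. Attempting to build a Riccati directly for $h=\Delta_f d_o$ via $\Delta d_o=h+\langle\nabla f,\gamma'\rangle$ and the usual Cauchy--Schwarz for $|\Hess d_o|^2$ generates unavoidable cross terms involving $(f\circ\gamma)'$ that do not cancel under any single affine substitution. The precise shape of the extra divergence term in the $N=0$ Bochner identity is what makes $(H,s)$ the correct change of variables, and this is what forces the denominator $n$ (rather than $n-1$) in the clean Riccati, thereby producing the coefficient $n$ in the final bound.
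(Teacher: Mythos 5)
Your proof is correct. Note that the paper does not prove Proposition \ref{thm:Laplacian} at all: it is imported from Kuwae--Li \cite{KL} (their Theorem 2.2 combined with Lemma 6.1), so the comparison is with that source rather than with an in-paper argument. Your route is in substance the same comparison-geometric one as \cite{KL} (following \cite{WY}): pass to the reparametrized variable $s(t)=\int_0^t \e^{-\frac{2f(\gamma(\tau))}{n}}\,\d\tau$ and the normalized quantity $H(s)=\e^{\frac{2f}{n}}\Delta_f d_o$, obtain the autonomous Riccati inequality $H'+\tfrac{1}{n}H^2+n\kappa\le 0$, compare with the model $G(s)=n\,s_\kappa'(s)/s_\kappa(s)$ using the initial asymptotics $sH(s)\to n-1<n$, and then convert to the stated form via $|f|\le b$ exactly as in \cite[Lemma 6.1]{KL} (using $\e^{-\frac{2f}{n}}\le \e^{\frac{2b}{n}}$, $s(t)\ge \e^{-\frac{2b}{n}}t$, and the monotonicity of $s_\kappa'/s_\kappa$ for $\kappa\le 0$). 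The genuine difference is in how the Riccati inequality is produced: you derive it from the paper's own $N=0$ Bochner identity (Proposition \ref{thm:0-bochner}) applied to $d_o$, discarding the nonnegative trace-free Hessian term, whereas \cite{KL} works directly with second-order ODE/index-form computations for the weighted volume element along radial geodesics; your derivation is equivalent but has the merit of being internal to tools already stated in the paper, and it explains transparently why the constant is $n$ rather than $n-1$. All steps check out (the Gronwall-type Sturm comparison at the first crossing point handles the sign of $H+G$ correctly, and both halves of $|f|\le b$ are used where needed); the only point to state a bit more carefully is that Proposition \ref{thm:0-bochner} is invoked only on a neighborhood of $\gamma((0,r])$, where $d_o$ is smooth for $p\notin\{o\}\cup\cut(o)$, which your setup already guarantees.
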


\begin{prop}[\cite{KL}]\label{thm:doubling}
Let $(M, g,f)$ be an $n$-dimensional complete weighted Riemannian manifold. 
For $\kappa \leq 0$ and $b \geq  0$, 
we assume $\Ric_f^0 \geq n \,\kappa\, \mathrm{e}^{-\frac{4f}{n}} \,g$ and $|f| \leq b$.
Let $o\in M$.
Then for all $r_1,r_2>0$ with $r_1<r_2$,
we have
\begin{equation*}
    \frac{m_f(B_{r_2}(o))}{m_f(B_{r_1}(o))} \leq \e^{4b}\left(\frac{r_2}{r_1}\right)^{n + 1}\exp\left( r_2\,\e^{\frac{2b}{n}}\sqrt{-n\,\kappa}\right).
\end{equation*}
\end{prop}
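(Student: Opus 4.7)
The plan is to derive this volume doubling estimate by applying the Laplacian comparison of Proposition \ref{thm:Laplacian} inside a classical Bishop--Gromov monotonicity argument, and then simplifying the resulting sharp model-volume estimate using the bound $|f|\leq b$. Fix $o\in M$ and introduce polar normal coordinates $x = \exp_o(r\theta)$ with $\theta \in S_oM$ and $r \in [0, c(\theta))$, where $c(\theta)$ denotes the cut distance in the direction $\theta$. Let $\omega_f(r, \theta)$ denote the density of $m_f$ in these coordinates, extended by zero past $c(\theta)$, so that $m_f(B_R(o)) = \int_{S_oM}\int_0^R \omega_f(s, \theta)\,\d s\,\d\theta$ and $\partial_r \log \omega_f(r, \theta) = \Delta_f d_o$ at $\exp_o(r\theta)$ for $r < c(\theta)$.

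Proposition \ref{thm:Laplacian} immediately yields the pointwise inequality
\[
\partial_r \log \omega_f(r, \theta) \,\leq\, \frac{n\,\e^{4b/n}}{r}\,\mathcal{H}_\kappa\bigl(\e^{-2b/n} r\bigr).
\]
Introducing the model profile $\Psi(r) := s_\kappa(\e^{-2b/n} r)^{n\,\e^{4b/n}}$ — for which $(\log\Psi)'$ is exactly the right-hand side — shows that $\omega_f(\cdot, \theta)/\Psi$ is non-increasing on $[0, c(\theta))$ for each $\theta$. The standard calculus lemma (if $h/g$ is non-increasing then so is $R\mapsto \int_0^R h/\int_0^R g$), applied pointwise in $\theta$ and then integrated over $\theta\in S_oM$, gives the model-volume comparison
\[
\frac{m_f(B_{r_2}(o))}{m_f(B_{r_1}(o))} \,\leq\, \frac{\int_0^{r_2}\Psi(s)\,\d s}{\int_0^{r_1}\Psi(s)\,\d s}.
\]

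It remains to estimate the right-hand side by the explicit expression in the statement. For $\kappa \leq 0$, the elementary sandwich $s \leq s_\kappa(s) \leq s\cosh(s\sqrt{-\kappa}) \leq s\,\e^{s\sqrt{-\kappa}}$ bounds $\Psi(s)$ from above and below by polynomial-times-exponential expressions, and a direct integration produces a ratio bound of polynomial-times-exponential form. The main obstacle is the final bookkeeping to match this to the clean shape asserted: the polynomial exponent $n\,\e^{4b/n}$ produced by the natural model must be reduced to $n+1$, and the exponential constant recast with the factor $\sqrt{n}$ inside the radical, with both discrepancies absorbed into the explicit prefactor $\e^{4b}$ via $|f|\leq b$. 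An alternative that sidesteps some of this bookkeeping is to first reparametrize the radial variable as $\tilde r = \int_0^r \e^{-2f\circ\gamma/n}\,\d\sigma$ along each radial geodesic $\gamma$; then the Laplacian inequality takes an essentially unweighted form, the Bishop--Gromov integration yields a bound directly in the shape stated, and the change of variables between $r$ and $\tilde r$ (controlled by $r\,\e^{-2b/n}\leq \tilde r\leq r\,\e^{2b/n}$) produces the $\e^{4b}$ prefactor.
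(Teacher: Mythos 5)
Note first that the paper itself offers no proof of Proposition \ref{thm:doubling}: it is quoted from Kuwae--Li \cite{KL}, so your argument has to stand on its own. Its structural part is fine: Proposition \ref{thm:Laplacian} does give monotonicity of $\omega_f/\Psi$ with $\Psi(r)=s_\kappa(\e^{-2b/n}r)^{n\e^{4b/n}}$, and your second, reparametrized route is exactly the Wylie--Yeroshkin/Kuwae--Li mechanism: with $s=\int_0^r\e^{-2f/n}\,\d\sigma$ one gets monotonicity of $(\e^{-f}\mathcal{A})(r(s),\theta)/s_\kappa(s)^n$ in $s$, whence the polynomial exponent $n+1$. The genuine gap sits precisely at the step you defer to ``bookkeeping'': neither route produces the stated exponential factor, and no bookkeeping can. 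Route 1 yields the power $n\e^{4b/n}+1$ (which exceeds $n+1$ as soon as $b\gtrsim\tfrac n4\log(1+\tfrac1n)$) and the exponential rate $n\e^{2b/n}\sqrt{-\kappa}$; route 2 still yields the rate $n\sqrt{-\kappa}$ in the variable $s\le\e^{2b/n}r_2$, i.e.\ $\exp\bigl(n\e^{2b/n}\sqrt{-\kappa}\,r_2\bigr)$, not $\exp\bigl(\e^{2b/n}\sqrt{-n\kappa}\,r_2\bigr)$. Discrepancies of the form $(r_2/r_1)^{n\e^{4b/n}-n-1}$ or $\exp\bigl((n-\sqrt n)\sqrt{-\kappa}\,\e^{2b/n}r_2\bigr)$ grow without bound in $r_2/r_1$, resp.\ $r_2$, and cannot be absorbed into the fixed prefactor $\e^{4b}$.

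In fact the inequality in the printed form is not reachable at all: it already fails for unweighted hyperbolic space. Take $f\equiv0$ (so $b=0$) and $M=\mathbb{H}^n$ with sectional curvature $-1$, $n\ge3$; then $\Ric_f^0=\Ric=-(n-1)g$, so the hypothesis holds with $\kappa=-(n-1)/n$, but $m_f(B_{r_2}(o))/m_f(B_{r_1}(o))$ grows like $\e^{(n-1)r_2}$ while the claimed bound grows like $r_2^{n+1}\e^{\sqrt{n-1}\,r_2}$, a contradiction for $r_1$ fixed and $r_2$ large. What your second route actually proves --- and what is presumably intended, the printed $\sqrt{-n\kappa}$ being a misprint for $n\sqrt{-\kappa}=\sqrt n\,\sqrt{-n\kappa}$ --- is a bound of the form $C(n,b)\,(r_2/r_1)^{n+1}\exp\bigl(n\e^{2b/n}\sqrt{-\kappa}\,r_2\bigr)$, where the crude change of variables gives $C(n,b)$ like $\e^{4b(n+2)/n}$ rather than $\e^{4b}$. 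That corrected form is all that is used later (in Proposition \ref{thm:for-KS-1} only the dependence of the doubling constant on $n,b,r\sqrt{-\kappa}$ matters), but as a proof of the statement as written your argument cannot be completed; you should either carry out the reparametrized Bishop--Gromov argument with the corrected constants explicitly, or flag the misstatement and cite \cite{KL} for the precise form.
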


Proposition \ref{thm:Laplacian} enables us to construct the following barrier function (cf. \cite[Lemma 5.5]{C}, \cite[Lemma 3.3]{Kim}, \cite[Lemma 4.2]{WZ}, \cite[Lemma 6.4]{WZ2}):
\begin{lem}\label{lem:for-KS-2}
Let $(M, g,f)$ be an $n$-dimensional complete weighted Riemannian manifold. 
For $\kappa \leq 0$ and $b \geq  0$, 
assume $\Ric_f^0 \geq n \,\kappa\, \mathrm{e}^{-\frac{4f}{n}} \,g$ and $|f| \leq b$.
Let $\delta_1\in (0,1)$ and $r>0$.
Let $o\in M$.
Then there exists $\phi \in C(M)$ such that the following hold:
\begin{enumerate}\setlength{\itemsep}{+1.0mm}
\item $\phi \geq 0$ on $B_{r}(o)\setminus B_{3r/4}(o)$, and $\phi\leq -2 $ on $B_{r/2}(o)$;
\item $\phi\geq -C_1$ on $B_{r}(o)$;
\item we have 
    \begin{equation*}
        \mathcal{H}_\kappa\left(2r\,\e^{\frac{2b}{n}} \right) + \frac{r^2}{n} \e^{\frac{4b}{n}} \Delta_f \phi \leq 0
    \end{equation*}
    on $B_{r}(o) \setminus B_{\delta_1 r}(o)$ outside $\mathrm{Cut}(o)$;
\item we have 
    \begin{equation*}
            \mathcal{H}_\kappa\left( 2r\,\e^{\frac{2b}{n}} \right) + \frac{r^2}{n}\e^{\frac{4b}{n}}  \Delta_f \phi \leq C_2
    \end{equation*}
    on $B_{\delta_1 r}(o)$ outside $\mathrm{Cut}(o)$.
\end{enumerate}
Here, $C_1$ and $C_2$ are positive constants depending only on $n,b,\delta_1,r\sqrt{-\kappa}$.
\end{lem}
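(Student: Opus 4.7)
The plan is to take $\phi$ as a continuous radial function $\phi(x) = F(d_o(x))$, where $F \colon [0, r] \to \mathbb{R}$ is nondecreasing, piecewise $C^2$, and defined so that its singular/steep behavior is confined to $(0, \delta_1 r)$. Because $\phi$ is only required to be continuous, I may allow a corner at $\rho = \delta_1 r$ where two pieces of $F$ are glued. Away from $\{o\} \cup \mathrm{Cut}(o)$ one has $|\nabla d_o| \equiv 1$, so
\begin{equation*}
\Delta_f \phi \;=\; F''(d_o) \;+\; F'(d_o)\, \Delta_f d_o,
\end{equation*}
and since $F' \geq 0$, Proposition \ref{thm:Laplacian} upgrades this to
\begin{equation*}
\Delta_f \phi \;\leq\; F''(\rho) \;+\; F'(\rho) \cdot \frac{n\,\e^{4b/n}}{\rho}\, \mathcal{H}_\kappa\!\left(\e^{-2b/n}\rho\right)\Big|_{\rho \,=\, d_o(x)}.
\end{equation*}

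On the annular region $\rho \in [\delta_1 r, r]$ I would use the explicit ansatz $F(\rho) = \alpha - \beta \rho^{-\gamma}$ with $\alpha,\beta,\gamma > 0$ to be chosen. A direct substitution reduces the right-hand side above to
\begin{equation*}
\beta \gamma\, \rho^{-\gamma - 2} \left[\, -(\gamma + 1) \;+\; n\,\e^{4b/n}\, \mathcal{H}_\kappa\!\left(\e^{-2b/n} \rho\right) \,\right].
\end{equation*}
Since $\mathcal{H}_\kappa$ is monotone in its argument, selecting $\gamma$ strictly larger than $n\,\e^{4b/n}\mathcal{H}_\kappa(\e^{-2b/n}r) - 1$ (a quantity depending only on $n$, $b$, and $r\sqrt{-\kappa}$) forces the bracket to be a negative constant uniformly on $[\delta_1 r, r]$. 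A further suitable choice of $\beta$ then makes $\Delta_f \phi$ negative enough to absorb the additive term $\mathcal{H}_\kappa(2r\e^{2b/n})$ after scaling by $r^2 \e^{4b/n}/n$, which is precisely property (3). The constants $\alpha, \beta$ are then pinned down by the endpoint relations $F(3r/4) = 0$ and $F(r/2) = -2$, securing the outer portion of property (1).

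To produce the global lower bound and handle the inner region, I would extend $F$ to $[0, \delta_1 r]$ by an affine cap matching $C^1$-data at $\rho = \delta_1 r$, say $F(\rho) = F(\delta_1 r) + F'(\delta_1 r)(\rho - \delta_1 r)$. This removes the singularity of the ansatz at the origin, keeps $F$ monotone, and yields $\phi \geq F(0) =: -C_1$ on $B_r(o)$, giving (2). On $B_{\delta_1 r}(o)$ the linear piece has $F'' \equiv 0$, so Proposition \ref{thm:Laplacian} controls $\Delta_f \phi$ by $F'(\delta_1 r)\cdot\frac{n\,\e^{4b/n}}{\rho}\mathcal{H}_\kappa(\e^{-2b/n}\rho)$; the factor $\mathcal{H}_\kappa(\e^{-2b/n}\rho)/\rho$ is bounded on $(0, \delta_1 r]$ (it equals $\frac{1}{\rho} + O(\rho)$ near $0$ and is tame for $\rho$ away from $0$), and after multiplying by $r^2\e^{4b/n}/n$ one obtains an upper bound $C_2$ depending only on $n, b, \delta_1, r\sqrt{-\kappa}$, proving (4).

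The main obstacle I anticipate is the piecewise bookkeeping across the two regimes: one must verify that the glued $F$ is globally nondecreasing; that $\phi \leq -2$ throughout $B_{r/2}(o)$, which in the case $\delta_1 \leq 1/2$ follows from $F(r/2) = -2$ on the $\rho^{-\gamma}$ piece, but in the case $\delta_1 > 1/2$ must instead be read off the linear cap; and that every implicit constant reduces to a function of the four quantities $n, b, \delta_1, r\sqrt{-\kappa}$. Once the exponent $\gamma$ is fixed as above and the linear extension is used on $[0, \delta_1 r]$, these checks are a careful but essentially routine exercise; splitting on the two cases $\delta_1 \leq 1/2$ and $\delta_1 > 1/2$ (with the $F(r/2) = -2$ condition replaced in the latter by $F(0) \leq -2$ and monotonicity) completes the verification.
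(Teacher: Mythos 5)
Your overall strategy (a radial barrier $\phi=F(d_o)$, estimated via Proposition \ref{thm:Laplacian} with a power-type profile on the annulus and a cap near the center) is the right one, but two of your concrete steps fail. First, the affine cap on $[0,\delta_1 r]$ does not give property (4): near $o$ one has $\Delta_f\phi=F'(\delta_1 r)\,\Delta_f d_o$, and $\Delta_f d_o$ blows up like $(n-1)/d_o$ as $d_o\to 0$ (already on $\IR^n$ with $f\equiv 0$), so $\frac{r^2}{n}\e^{\frac{4b}{n}}\Delta_f\phi\to+\infty$ and no constant $C_2$ exists. Your own expansion ``$\tfrac1\rho+O(\rho)$'' shows the factor $\mathcal{H}_\kappa(\e^{-2b/n}\rho)/\rho$ is unbounded on $(0,\delta_1 r]$, contradicting the boundedness you then assert. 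The standard remedy is a cap whose radial derivative vanishes at $o$, e.g.\ the paraboloid $F(\rho)=F(\delta_1 r)-\tfrac12 F'(\delta_1 r)\delta_1 r+\tfrac{F'(\delta_1 r)}{2\delta_1 r}\rho^2$ matched $C^1$ at $\delta_1 r$; then $F'(\rho)/\rho$ is constant, Proposition \ref{thm:Laplacian} yields a bound depending only on $n,b,\delta_1,r\sqrt{-\kappa}$, and monotonicity and the lower bound (2) are preserved.

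Second, you cannot simultaneously choose $\beta$ ``negative enough'' for (3) and pin $\alpha,\beta$ by the two equalities $F(3r/4)=0$, $F(r/2)=-2$: those equalities force $\beta=2r^{\gamma}/\bigl(2^{\gamma}-(4/3)^{\gamma}\bigr)$, and with this $\beta$ property (3) fails at points with $d_o$ close to $r$ for every admissible $\gamma$ once $n\geq 3$. Indeed, on $\IR^n$ with $f\equiv 0$, $\kappa=0$, condition (3) at $\rho=r$ reads $2\gamma(\gamma+2-n)\geq n\bigl(2^{\gamma}-(4/3)^{\gamma}\bigr)$, which is impossible: increasing $\gamma$ improves the sign of the bracket but shrinks the pinned $\beta$ exponentially, so the required quantitative negativity $\Delta_f\phi\leq-\tfrac{n}{r^2}\e^{-\frac{4b}{n}}\mathcal{H}_\kappa\bigl(2r\e^{\frac{2b}{n}}\bigr)$ is lost. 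The fix is to keep $\beta$ free: impose only $F(3r/4)=0$ (i.e.\ $\alpha=\beta(3r/4)^{-\gamma}$) and $F(r/2)\leq-2$, taking $\beta$ to be the larger of the value needed for (3) and the value giving $F(r/2)=-2$; equivalently, build the barrier with $\Delta_f\phi\leq-\eps/r^2$ and multiply $\phi$ by a large constant $\lambda\geq 1$ depending only on $n,b,\delta_1,r\sqrt{-\kappa}$, which preserves (1) and the sign conditions and merely inflates $C_1,C_2$ (this scaling also settles your case distinction $\delta_1\gtrless 1/2$, since $F$ is strictly increasing and $F(r/2)<F(3r/4)=0$). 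With these two repairs the argument goes through.
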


Furthermore,
based on Lemma \ref{lem:for-KS-2},
one can show the following  (cf. \cite[Lemma 4.3]{WZ}):
\begin{lem}\label{lem:for-KS-3}
Let $(M, g,f)$ be an $n$-dimensional complete weighted Riemannian manifold. 
For $\kappa \leq 0$ and $b \geq  0$, 
assume $\Ric_f^0 \geq n \,\kappa\, \mathrm{e}^{-\frac{4f}{n}} \,g$ and $|f| \leq b$.
Let $\delta_1\in (0,1)$ and $r>0$.
Let $o\in M$.
Let $\phi\in C(M)$ be a function obtained in Lemma $\ref{lem:for-KS-2}$,
and let $u\in C^{\infty}(M)$ be a function satisfying 
\begin{equation*}
    u \geq 0 \mbox{ on } B_{r}(o),\quad  \inf_{B_{r/2}(o)} u \leq 1.
\end{equation*}
Set $a := r^{-2}$ and $w := u + \phi$.
Then there exists $x_0\in B_{3r/4}(o)$ such that
\begin{equation*}
    A_a(\overline{B}_{r/2}(x_0), B_{r}(o), w) \subset B_{3r/4}(o) \cap \{u \leq C_3\},
\end{equation*}
where $C_3$ is a positive constant depending only on $n,b,\delta_1, r\sqrt{-\kappa}$.
\end{lem}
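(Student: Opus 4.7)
The plan is to follow the standard ABP positioning trick: choose $x_0$ to be a minimizer of $w$ on $\overline{B}_r(o)$ and exploit the barrier properties of $\phi$ from Lemma \ref{lem:for-KS-2} to confine the contact set well inside $B_r(o)$.

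First I would locate a deep interior minimum of $w$. Combining the hypothesis $\inf_{B_{r/2}(o)} u \leq 1$ with Lemma \ref{lem:for-KS-2}(1), which gives $\phi \leq -2$ on $B_{r/2}(o)$, one obtains $\inf_{B_{r/2}(o)} w \leq -1$. By continuity of $w = u + \phi$ and compactness of $\overline{B}_r(o)$, the minimum is attained at some $x_0 \in \overline{B}_r(o)$ with $w(x_0) \leq -1$. On the other hand, $u \geq 0$ on $B_r(o)$ by assumption and $\phi \geq 0$ on the annular shell $B_r(o) \setminus B_{3r/4}(o)$ (extending to its closure by continuity of $\phi$), hence $w \geq 0$ off $B_{3r/4}(o)$. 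The strict negativity $w(x_0) \leq -1 < 0$ therefore forces $x_0 \in B_{3r/4}(o)$.

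Next I would analyse any contact point $x \in A_a(\overline{B}_{r/2}(x_0), B_r(o), w)$. By the definition of the contact set, there exists $y \in \overline{B}_{r/2}(x_0)$ such that the infimum of $w + (a/2) d_y^2$ on $\overline{B}_r(o)$ is attained at $x$. Testing against the competitor $x_0$, and using $d_y(x_0) \leq r/2$ together with $a = r^{-2}$, yields $w(x) \leq w(x_0) + (a/2)(r/2)^2 \leq -1 + 1/8 = -7/8$. Since $w(x) < 0$ while $w \geq 0$ off $B_{3r/4}(o)$, this forces $x \in B_{3r/4}(o)$. Finally, writing $u(x) = w(x) - \phi(x)$ and using $\phi \geq -C_1$ on $B_r(o)$ from Lemma \ref{lem:for-KS-2}(2), one recovers $u(x) \leq -7/8 + C_1$; thus $C_3 := C_1 - 7/8$ suffices and depends only on the parameters $n, b, \delta_1, r\sqrt{-\kappa}$ inherited from Lemma \ref{lem:for-KS-2}.

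No serious obstruction is anticipated: the substantive work has already been absorbed into the construction of the barrier $\phi$, and the argument above is essentially a textbook ABP positioning manoeuvre. The only point demanding minor care is the confirmation that $x_0$ lies strictly inside $B_{3r/4}(o)$ (rather than on its boundary), but this is automatic from $w(x_0) \leq -1$ combined with the non-negativity of $w$ on the closed annulus. Properties (3) and (4) of Lemma \ref{lem:for-KS-2} are not needed for this lemma; they will only be invoked later, when Theorem \ref{thm:ABP} is applied to $w$ on the contact set.
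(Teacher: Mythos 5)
Your argument is correct and is exactly the standard positioning argument (minimize $w=u+\phi$ over $\overline{B}_r(o)$, then test the contact condition against $x_0$) that the paper leaves to the cited reference \cite[Lemma 4.3]{WZ}; the only cosmetic remark is that $C_1\geq 2$ automatically (since $\phi\leq -2$ on $B_{r/2}(o)$ and $\phi\geq -C_1$), so your $C_3=C_1-7/8$ is indeed positive.
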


Now,
Theorem \ref{thm:ABP} together with Lemmas \ref{lem:for-KS-2} and \ref{lem:for-KS-3} leads us to the following key assertion (cf. \cite[Lemma 3.1]{Kim}):
\begin{prop}\label{thm:for-KS-1}
Let $(M, g,f)$ be an $n$-dimensional complete weighted Riemannian manifold. 
For $\kappa \leq 0$ and $b \geq  0$, 
assume $\Ric_f^0 \geq n \,\kappa\, \mathrm{e}^{-\frac{4f}{n}} \,g$ and $|f| \leq b$.
Let $\delta_1\in (0,1)$ and $r>0$.
Then there exist positive constants $\delta_2, \delta_3, C_4$ depending only on $n, b, \delta_1,r\sqrt{-\kappa}$ such that the following hold:
For $o\in M$ and $F\in C^{\infty}(M)$,
let $u\in C^{\infty}(M)$ satisfy
\begin{equation*}
    \Delta_f u \leq F \mbox{ on } B_{r}(o) ,\quad u \geq 0 \mbox{ on } B_{r}(o),\quad  \inf_{B_{r/2}(o)} u \leq 1.
\end{equation*}
If
\begin{equation*}
    r^2 \left( \frac{1}{m_f(B_{r}(o)) }\int_{B_{r}(o)} |F|^{n} \ \d m_f \right)^{\frac{1}{n}} \leq \delta_2,
\end{equation*}
then 
\begin{equation*}
    \frac{m_f(\{u\leq C_4\}\cap B_{\delta_1 r}(o))}{m_f(B_{r}(o)) } \geq \delta_3.
\end{equation*}
\end{prop}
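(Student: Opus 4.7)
The plan is to follow the Cabr\'e--Krylov--Safonov scheme in the weighted setting, combining Theorem \ref{thm:ABP} with the barrier of Lemma \ref{lem:for-KS-2} and the contact-set localisation of Lemma \ref{lem:for-KS-3}. Set $a := r^{-2}$, let $\phi$ be the barrier from Lemma \ref{lem:for-KS-2}, and put $w := u + \phi$. Lemma \ref{lem:for-KS-3} then supplies a point $x_0 \in B_{3r/4}(o)$ with
\[
A_a := A_a(\overline{B}_{r/2}(x_0), B_r(o), w) \subset B_{3r/4}(o) \cap \{u \leq C_3\} \subset U := B_r(o).
\]
Applying Theorem \ref{thm:ABP} to $w$ on $U$ with $E := \overline{B}_{r/2}(x_0)$ yields
\[
m_f(\overline{B}_{r/2}(x_0)) \leq \int_{A_a}\left\{\mathcal{S}_\kappa\!\left(\tfrac{s_{a,f,w}|\nabla w|}{a}\right)\!\left(\mathcal{H}_\kappa\!\left(\tfrac{s_{a,f,w}|\nabla w|}{a}\right) + \tfrac{1}{na}\e^{\frac{2f}{n}}s_{a,f,w}\Delta_f w\right)\right\}^n\d m_f.
\]

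\textbf{Controlling the integrand.} For $x \in A_a$, the contact-set condition forces $\exp_x((\nabla w)_x / a)$ to lie in $\overline{B}_{r/2}(x_0)$; since both $x$ and $x_0$ lie in $B_{3r/4}(o)$, the quantity $|\nabla w(x)|/a$ equals the distance from $x$ to that point, which is at most $2r$. Together with $|f| \leq b$ this forces $a^{-1}s_{a,f,w}|\nabla w| \leq 2r\e^{2b/n}$, and the monotonicity of $\mathcal{S}_\kappa,\mathcal{H}_\kappa$ (available here since $\kappa \leq 0$) lets me replace them by their values at $2r\e^{2b/n}$. Writing $\Delta_f w = \Delta_f u + \Delta_f\phi \leq F + \Delta_f\phi$ and invoking Lemma \ref{lem:for-KS-2}(3)--(4), the bracket in the integrand is controlled by $\tfrac{r^2}{n}\e^{4b/n}|F|$ on $A_a \setminus B_{\delta_1 r}(o)$ (where the barrier was engineered to annihilate the $\mathcal{H}_\kappa$-term) and by $C_2 + \tfrac{r^2}{n}\e^{4b/n}|F|$ on $A_a \cap B_{\delta_1 r}(o)$.

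\textbf{Splitting and absorbing $F$.} Raising to the $n$-th power, integrating, and using $A_a \cap B_{\delta_1 r}(o) \subset \{u \leq C_3\} \cap B_{\delta_1 r}(o)$ yields
\[
m_f(\overline{B}_{r/2}(x_0)) \leq C_6\,m_f(\{u \leq C_3\} \cap B_{\delta_1 r}(o)) + C_5 r^{2n}\int_{B_r(o)}|F|^n\,\d m_f,
\]
with $C_5,C_6$ depending only on $n,b,\delta_1,r\sqrt{-\kappa}$. The hypothesis on the $L^n$ norm of $F$ turns the last term into at most $C_5\delta_2^n\,m_f(B_r(o))$, and Proposition \ref{thm:doubling} applied to concentric balls centred at $x_0 \in B_{3r/4}(o)$ produces a constant $c_7 > 0$, depending only on the same quantities, with $m_f(\overline{B}_{r/2}(x_0)) \geq c_7\,m_f(B_r(o))$. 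Choosing $\delta_2$ so that $C_5\delta_2^n \leq c_7/2$, setting $C_4 := C_3$ and $\delta_3 := c_7/(2C_6)$, gives the proposition after trivial rearrangement.

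\textbf{Main obstacle.} The delicate point is that the weighted ABP integrand carries an extra multiplicative factor $\e^{2f/n}s_{a,f,w}$ absent in the unweighted case; the barrier of Lemma \ref{lem:for-KS-2} is tuned precisely so that the $\e^{4b/n}$ appearing in its conditions (3)--(4) absorbs both this factor and the uniform bound $s_{a,f,w} \leq \e^{2b/n}$, preserving the $\mathcal{H}_\kappa$-cancellation on the outer annulus. Verifying carefully that no $\kappa$- or $b$-dependent loss occurs in this coupling, together with the standard a.e. handling of $\mathrm{Cut}(o)$ (on which $\phi$ is only continuous, so one applies the ABP in the Alexandrov sense), is the main technical burden; the remainder is bookkeeping of constants.
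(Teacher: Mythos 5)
Your proposal is correct and follows essentially the same route as the paper's proof: apply Theorem \ref{thm:ABP} to $w=u+\phi$ with $E=\overline{B}_{r/2}(x_0)$ from Lemma \ref{lem:for-KS-3}, bound $a^{-1}|\nabla w|\leq 2r$ via the contact-point identity, absorb the weight through $|f|\leq b$ so the barrier cancellation of Lemma \ref{lem:for-KS-2} applies, and close with the doubling estimate of Proposition \ref{thm:doubling} and a small choice of $\delta_2$. The only cosmetic differences are that the paper splits the integral by the Minkowski inequality in $L^n$ rather than after raising to the $n$-th power, and it disposes of the cut-locus issue by assuming $\mathrm{Cut}(o)\cap A_a=\emptyset$ (citing Wang--Zhang) rather than invoking an Alexandrov-sense ABP.
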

\begin{proof}
Let $A_a :=  A_a(\overline{B}_{r/2}(x_0),B_{r}(o), w)$ be the contact set obtained in Lemma \ref{lem:for-KS-3}.
Notice that
we may assume $\mathrm{Cut}(o)\cap A_a = \emptyset$ (see \cite[Lemma 4.4]{WZ} and \cite[Lemmas 3.6 and 6.4]{WZ2}).
For every $x \in A_a$, 
there exists $y \in \overline{B}_{r/2}(x_0)$ such that
\begin{equation*}
    (\nabla w)_x + a d_y(x) (\nabla d_{y})_x = 0;
\end{equation*}
in particular,
Lemma \ref{lem:for-KS-3} implies
\begin{equation*}\label{eq:for-ks-thm-1}
a^{-1} |(\nabla w)_x| = d(x,y) \leq d(x,o) + d(o,x_0) + d(x_0,y) < 3r/4 + 3r/4 + r/2 = 2r.
\end{equation*}
Therefore,
due to Theorem \ref{thm:ABP},
\begin{equation*}
    m_f(B_{r/2}(x_0))\leq \int_{A_a}\left\{\mathcal{S}_\kappa\left(2r\,\e^{\frac{2b}{n}} \right)\left( \mathcal{H}_\kappa\left(2r\,\e^{\frac{2b}{n}}\right) + \frac{1}{na}  \e^{\frac{4b}{n}}\Delta_f w \right)\right\}^n\ \d m_f.
\end{equation*}
Furthermore,
Lemma \ref{lem:for-KS-2} yields
\begin{align*}
    m_f(B_{r/2}(x_0))^{\frac{1}{n}}
    &\leq \mathcal{S}_\kappa\left(2r\,\e^{\frac{2b}{n}} \right) \left\{\int_{A_a}\left( \frac{1}{na} \e^{\frac{4b}{n}}   |F| + C_2  \,  \chi_{A_a\cap  B_{\delta_1 r}(o)} \right)^n \ \d m_f\right\}^{\frac{1}{n}}\\
    &\leq \mathcal{S}_\kappa\left(2r\,\e^{\frac{2b}{n}} \right) \left\{ \frac{1}{na}\e^{\frac{4b}{n}}\left(\int_{B_{r}(o)} |F|^n\ \d m_f\right)^{\frac{1}{n}} + C_2 \,m_f(A_a\cap B_{\delta_1 r}(o))^{\frac{1}{n}} \right\},
\end{align*}
where $C_2$ is a positive constant in Lemma \ref{lem:for-KS-2}.
On the other hand,
by Proposition \ref{thm:doubling},
\begin{equation*}
    \frac{m_f(B_{r}(o))}{m_f(B_{r/2}(x_0)) } \leq \frac{m_f(B_{7r/4}(x_0)) }{m_f(B_{r/2}(x_0)) }\leq C_5
\end{equation*}
since $x_0\in B_{3r/4}(o)$,
where $C_5$ is a positive constant depending only on $n,b,r\sqrt{-\kappa}$.
Combining these estimates, we see 
\begin{equation*}
 C_5^{-\frac{1}{n}} \leq \mathcal{S}_\kappa\left(2r\,\e^{\frac{2b}{n}} \right) \left\{\frac{1}{na}\left(\frac{1}{m_f(B_{r}(o)) }\int_{B_{r}(o)}|F|^n\ \d m_f\right)^{\frac{1}{n}} + \frac{C_2 \,m_f(\{u\leq C_3\}\cap B_{\delta_1 r}(o))^{\frac{1}{n}}}{m_f(B_{r}(o))^{\frac{1}{n}}} \right\},
\end{equation*}
where $C_3$ is a positive constant obtained in Lemma \ref{lem:for-KS-3}.
Taking $\delta_2$ sufficiently small, we complete the proof.
\end{proof}

Once we obtain Proposition \ref{thm:for-KS-1},
the following desired Harnack inequality follows from a standard covering argument:
\begin{thm}\label{thm:KS}
Let $(M, g,f)$ be an $n$-dimensional complete weighted Riemannian manifold. 
For $\kappa \leq 0$ and $b \geq  0$, 
we assume $\Ric_f^0 \geq n \,\kappa\, \mathrm{e}^{-\frac{4f}{n}} \,g$ and $|f| \leq b$.
For $o\in M$ and $r > 0$, 
let $ u \in C^{\infty}(M)$ satisfy $u \geq 0$ on $B_{2r}(o)$.
Then we have
\begin{equation*}
    \sup_{B_r(o)} u \leq C\left\{ \inf_{B_r(o)} u  + r^2 \left( \frac{1}{m_f(B_{2r}(o))}\int_{B_{2r}(o)} |\Delta_f u|^{n}  \ \d m_f  \right)^{\frac{1}{n}}\right\},
\end{equation*}
where $C$ is a positive constant depending only on $n,b,r\sqrt{-\kappa}$.
\end{thm}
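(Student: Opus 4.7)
The plan is to deduce Theorem \ref{thm:KS} from Proposition \ref{thm:for-KS-1} by the now-classical Krylov--Safonov iteration, following the strategy in \cite{C}, \cite{Kim}, \cite{WZ}, \cite{WZ2} adapted to the weighted setting. A preliminary normalization replaces $u$ by a suitable positive affine combination so that it suffices to bound $\sup_{B_r(o)} u$ under the assumption $\inf_{B_r(o)} u \leq 1$ together with smallness of the $L^n$-average of $|\Delta_f u|$ on $B_{2r}(o)$ (the threshold being $\delta_2$ from Proposition \ref{thm:for-KS-1}).

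The heart of the argument is a growth lemma, obtained by applying Proposition \ref{thm:for-KS-1} on each subball $B_\rho(x) \subset B_{2r}(o)$ in combination with the doubling property of Proposition \ref{thm:doubling}: whenever $u \geq 0$ on $B_\rho(x)$ and $\inf_{B_{\rho/2}(x)} u \leq 1$, a definite $m_f$-fraction of $B_{\delta_1\rho}(x)$ belongs to $\{u \leq C_4\}$. A Vitali / Calder\'on--Zygmund-type covering argument, using doubling to control volumes of enlarged balls, then upgrades this one-step improvement into the geometric decay
\begin{equation*}
 m_f\bigl(\{u > C_4^k\}\cap B_r(o)\bigr)\leq C\,\theta^k\,m_f(B_{2r}(o))
\end{equation*}
for some $\theta\in(0,1)$ and every integer $k\geq 1$.

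Summing this geometric decay yields a weak $L^\varepsilon$ bound
\begin{equation*}
 \biggl(\frac{1}{m_f(B_r(o))}\int_{B_r(o)} u^{\varepsilon}\ \d m_f \biggr)^{1/\varepsilon} \leq C
\end{equation*}
for $\varepsilon := -\log\theta/\log C_4 > 0$. Coupled with a local maximum principle for $\Delta_f$-subsolutions, itself obtained by applying Theorem \ref{thm:ABP} to $(u-t)_+$ and its positive powers, this weak bound converts into the pointwise estimate $\sup_{B_r(o)} u \leq C$; undoing the normalization reproduces the inequality asserted in Theorem \ref{thm:KS}.

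The main obstacle will be the careful tracking of how constants depend on the scale-curvature quantity $r\sqrt{-\kappa}$ and on the $L^\infty$-bound $b$ of $f$. Both the barrier construction in Lemma \ref{lem:for-KS-2} and the doubling ratio in Proposition \ref{thm:doubling} carry exponential factors in $b$ and in $r\,\e^{2b/n}\sqrt{-n\kappa}$, so the iteration must be arranged to use only balls of radius at most $2r$; only then does a uniformly bounded range of scales appear and the accumulated constants remain under control. Executing the Vitali covering in the weighted metric measure space carefully, so that no extra scale-dependence leaks in, is where the remaining technicality lies.
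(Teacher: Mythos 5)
Your proposal takes essentially the same route as the paper: the paper proves Theorem \ref{thm:KS} by establishing Proposition \ref{thm:for-KS-1} (via Theorem \ref{thm:ABP}, the barrier of Lemma \ref{lem:for-KS-2}, Lemma \ref{lem:for-KS-3}, and the doubling estimate of Proposition \ref{thm:doubling}) and then simply invoking the standard Krylov--Safonov covering/iteration argument of \cite{C}, \cite{Kim}, \cite{WZ}, \cite{WZ2}, which is exactly the growth-lemma, Calder\'on--Zygmund decay, weak $L^\varepsilon$ and local-maximum-principle scheme you outline. Your attention to keeping all balls of radius at most $2r$ so the constants depend only on $n,b,r\sqrt{-\kappa}$ is consistent with how the paper's constants are stated.
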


As an application of Theorem \ref{thm:KS}, 
we have the following Liouville theorem (cf. \cite[Corollary 2.2]{C}, \cite[Corollary 1.2]{Kim}):
\begin{cor}\label{cor:Liouville}
Let $(M,g,f)$ be a complete weighted Riemannian manifold.
We assume $\Ric_f^0\geq 0$ and $f$ is bounded.
Let $u \in C^{\infty}(M)$ be an $f$-harmonic function $($i.e., $\Delta_f u = 0$$)$ bounded from below.
Then $u$ must be constant.
\end{cor}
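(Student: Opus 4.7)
The plan is to reduce the statement to a direct application of the Krylov--Safonov type Harnack inequality (Theorem \ref{thm:KS}) with $\kappa = 0$. The hypothesis $\Ric_f^0 \geq 0$ is exactly the case $\kappa = 0$ of $\Ric_f^0 \geq n\kappa\, \e^{-\frac{4f}{n}} g$, and the boundedness of $f$ provides the constant $b \geq 0$ with $|f| \leq b$. Hence all hypotheses of Theorem \ref{thm:KS} are satisfied.

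First, I would replace $u$ by the shifted function $v := u - \inf_M u$, which is well-defined since $u$ is bounded from below. This $v$ is still smooth, it is still $f$-harmonic ($\Delta_f v = 0$), and it satisfies $v \geq 0$ on all of $M$ with $\inf_M v = 0$. Fix any basepoint $o \in M$. Applying Theorem \ref{thm:KS} to $v$ on $B_{2r}(o)$ and observing that the integral term involving $|\Delta_f v|^n$ vanishes, I obtain
\begin{equation*}
\sup_{B_r(o)} v \leq C \inf_{B_r(o)} v,
\end{equation*}
where $C$ depends only on $n$, $b$, and $r\sqrt{-\kappa}$. Since $\kappa = 0$, the quantity $r\sqrt{-\kappa}$ is identically zero, so the constant $C = C(n,b)$ is \emph{independent of $r$}.

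The key step is to let $r \to \infty$. Fix an arbitrary point $x \in M$ and choose $r \geq d(x,o) + 1$ so that $x \in B_r(o)$. Then
\begin{equation*}
0 \leq v(x) \leq \sup_{B_r(o)} v \leq C \inf_{B_r(o)} v.
\end{equation*}
As $r \to \infty$, the balls $B_r(o)$ exhaust $M$, so $\inf_{B_r(o)} v \searrow \inf_M v = 0$. Since $C$ is a fixed constant, this forces $v(x) = 0$. As $x$ was arbitrary, $v \equiv 0$ on $M$, which means $u \equiv \inf_M u$, i.e., $u$ is constant.

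The main (and only nontrivial) obstacle is verifying that the constant $C$ in Theorem \ref{thm:KS} does not degenerate under $r \to \infty$; this is where the flatness $\kappa = 0$ is crucial, as it kills the $r\sqrt{-\kappa}$ dependence. Everything else is a direct consequence of the Harnack inequality together with the definition of infimum.
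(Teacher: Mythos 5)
Your proof is correct and is exactly the argument the paper intends: Corollary \ref{cor:Liouville} is stated as a direct application of Theorem \ref{thm:KS} (following \cite[Corollary 2.2]{C}, \cite[Corollary 1.2]{Kim}), namely applying the Harnack inequality to $u-\inf_M u$ and noting that with $\kappa=0$ the constant depends only on $n$ and $b$, so letting $r\to\infty$ forces the shifted function to vanish. Nothing is missing.
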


\begin{rem}
Under the same setting as in Corollary \ref{cor:Liouville},
the boundedness of $u$ can be replaced with a sublinear growth condition.
This assertion has been proven by the first named author \cite{F} under an assumption $\Ric_f^N\geq 0$ with $N \in (-\infty,0)$ (see \cite[Theorem 5.2]{F}).
One can extend it to a weak setting $\Ric_f^0\geq 0$ by observing that
every $f$-harmonic function $u$ satisfies $\Delta_f |\nabla u|^2\geq 0$ by virtue of Proposition \ref{thm:0-bochner},
and by using a mean value inequality (see \cite[Theorem 5.1]{F}, and also \cite[Corollary 4.3]{KLLS}).
\end{rem}

\section{Sobolev inequality}\label{sec:Sobolev}

The last one is the following Sobolev inequality of Brendle type:
\begin{thm}\label{thm:Sobolev}
Let $(M,g,f)$ be an $n$-dimensional complete weighted Riemannian manifold, and let $\Omega$ be a bounded domain in $M$ with smooth boundary. 
We assume $\Ric_f^0 \geq 0$ and $f \geq b$ for some constant $b\in \mathbb{R}$.
Then for every positive $\varphi \in C^{\infty}(\overline{\Omega})$ we have 
\begin{equation*}
    \int_{\Omega} |\nabla(\e^{\frac{2f}{n}} \varphi)| \ \d m_f + \int_{\partial \Omega}\e^{\frac{2f}{n}} \varphi\ \d m_{f,\partial \Omega} \geq n\,\e^{\frac{2b}{n}} \mathcal{V}^{\frac{1}{n}}\left( \int_{\Omega} \varphi^{\frac{n}{n-1}}\ \d m_f \right)^{\frac{n-1}{n}},
\end{equation*}
where we set
\begin{equation*}
\mathcal{V} := \limsup_{r\rightarrow +\infty}\frac{m_f(B_r(o))}{r^n}
\end{equation*}
for a base point $o\in M$.
\end{thm}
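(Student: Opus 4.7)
The plan is to adapt Brendle's optimal-transport proof \cite{B} of the Sobolev inequality for $\Ric \geq 0$, extended by Johne \cite{J} to $\Ric_f^N \geq 0$ with $N \in [n,+\infty)$, to the present weaker curvature bound $\Ric_f^0 \geq 0$. The key analytic input will be Theorem \ref{thm:ABP} with $\kappa = 0$, in which case $\mathcal{S}_0 \equiv \mathcal{H}_0 \equiv 1$ and the integrand reduces to $\bigl(1 + (na)^{-1}\,\e^{2f/n}\,s_{a,f,u}\,\Delta_f u\bigr)^n$. The one-sided hypothesis $f \geq b$ translates pointwise into $s_{a,f,u} \leq \e^{-2b/n}$, which will be the precise source of the factor $\e^{2b/n}$ in the conclusion.

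By standard approximation we may assume $\varphi$ is smooth and strictly positive on a neighborhood of $\overline{\Omega}$. Set $\Psi := \e^{2f/n}\varphi$, $\alpha := \int_\Omega \varphi^{n/(n-1)}\,\d m_f$, and let $L$ denote the left-hand side of the inequality, all assumed positive and finite. Put $a := L/(n\alpha^{(n-1)/n})$. Following \cite{B, J}, the first step is to construct a transport potential $u \in C^{2,\beta}(\overline{\Omega})$ by solving a linear Neumann problem of the form
\begin{equation*}
\Delta_f u = n\,\alpha^{-1/n}\,\varphi^{1/(n-1)} - h \quad\text{on } \Omega,\qquad u_\nu = a \quad\text{on } \partial\Omega,
\end{equation*}
where the bounded smooth source $h$ is chosen so that the integral compatibility of the Neumann problem is enforced by the very definition of $L$; the regularization expressing the $BV$-type quantity $\int_\Omega|\nabla\Psi|\,\d m_f$ by such a classical $h$ proceeds as in \cite{J}, and existence and regularity then follow from standard elliptic theory.

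Next, fix $o \in M$ and, for each $r > 0$, apply Theorem \ref{thm:ABP} with $\kappa = 0$, $E = \overline{B_r(o)}$, $U = \Omega$, potential $u$, and scale $a$. The Neumann condition $u_\nu = a$ should prevent the infimum of $u + (a/2)d_y^2$ from being attained on $\partial\Omega$, ensuring $A_a(E,\Omega,u) \subset \Omega$. On the contact set the bracket inside the $n$-th power is non-negative; one then bounds $s_{a,f,u} \leq \e^{-2b/n}$, substitutes the PDE, and absorbs the remaining gradient terms using the elementary estimate $|\nabla u|/a \leq r + \diam(\Omega)$ valid there. Routine algebra should then give
\begin{equation*}
m_f(B_r(o)) \leq \e^{-2b}\left(\frac{L}{n\,\alpha^{(n-1)/n}}\right)^n r^n + o(r^n) \qquad \text{as } r\to +\infty.
\end{equation*}
Dividing by $r^n$ and passing to $\limsup$ will convert the left-hand side into $\mathcal{V}$ and yield the stated inequality.

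The main obstacle will be the non-local factor $s_{a,f,u}$, which averages $\e^{-2f/n}$ along a geodesic whose length can diverge as $r \to +\infty$; the one-sided assumption $f \geq b$ is used precisely to tame this factor by the sharp uniform constant $\e^{-2b/n}$, and any sharpening to a bilateral $L^p$-type control would require a two-sided bound on $f$. A secondary point, handled along the lines of \cite{J}, is the smoothing needed to realise the $BV$ quantity $\int_\Omega|\nabla\Psi|\,\d m_f$ by a classical source $h$ compatible with the Neumann data, together with verifying that the integration-by-parts identities survive the regularization in the limit.
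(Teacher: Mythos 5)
Your overall strategy (run the Jacobian/ABP comparison of Theorem \ref{thm:ABP} with $\kappa=0$, and use $f\geq b$ only through the pointwise bound $s_{\cdot,f,u}\leq \e^{-2b/n}$) is indeed the paper's strategy, but two of your concrete steps do not work as stated. First, the PDE is mis-specified. Writing $\Delta_f u = n\,\alpha^{-1/n}\varphi^{1/(n-1)}-h$ with a source $h$ chosen only to make the Neumann problem compatible cannot give what the contact-set estimate needs: the ABP integrand contains the factor $\e^{2f/n}\,s\,\Delta_f u$, and since $f$ is only bounded \emph{below}, a bound on $\Delta_f u$ alone leaves the factor $\e^{2f/n}$ uncontrolled. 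The equation must be taken in divergence form with the weight $\Psi=\e^{2f/n}\varphi$ built in, namely $\Div\bigl(\e^{-(1-\frac{2}{n})f}\varphi\,\nabla u\bigr)=\e^{-f}\bigl(n\varphi^{\frac{n}{n-1}}-|\nabla \Psi|\bigr)$ with $u_\nu=1$ (after normalizing $L=n\alpha$, which is exactly the compatibility condition); then on $U:=\{|\nabla u|<1\}$ Cauchy--Schwarz kills the pair $-\langle\nabla\Psi,\nabla u\rangle-|\nabla\Psi|$ and yields the pointwise bound $\e^{2f/n}\varphi\,\Delta_f u\leq n\varphi^{\frac{n}{n-1}}$, in which the troublesome exponential is already absorbed. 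Your formulation, with $\Psi$ defined but never entering the operator, skips precisely this mechanism.

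Second, the scaling of the ABP application is wrong. You fix $a=L/(n\alpha^{(n-1)/n})$ and take $E=\overline{B_r(o)}$, but then the transport map $x\mapsto\exp_x((\nabla u)_x/a)$ has bounded image (since $|\nabla u|$ is bounded on $\overline\Omega$), so it cannot reach $\overline{B_r(o)}$ for large $r$; equivalently, for far-away $y$ the infimum of $u+\frac{a}{2}d_y^2$ \emph{is} attained on $\partial\Omega$ (the boundary criterion $u_\nu+a\,d_y\,\partial_\nu d_y\leq 0$ is easily met when $d_y$ is large), so the hypothesis $A_a(E,U,u)\subset U$ of Theorem \ref{thm:ABP} fails, and your claimed asymptotic $m_f(B_r(o))\leq \e^{-2b}a^n r^n+o(r^n)$ cannot be produced --- with fixed $a$ the right-hand side of the ABP bound is bounded in $r$. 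The transport scale must go like $1/r$: one uses the maps $\Phi_r(x)=\exp_x(r(\nabla u)_x)$, the contact sets $A_r\subset U=\{|\nabla u|<1\}$, and the target sets $E_r=\{y: d(x,y)<r \text{ for all }x\in\overline\Omega\}$, for which Brendle's lemma (\cite[Lemma 2.2]{B}) together with $u_\nu=1$ and $|\nabla u|<1$ guarantees interior contact and $E_r\subset\Phi_r(A_r)$; since $E_r$ contains a ball of radius $r-O(1)$ about $o$, dividing by $r^n$ still recovers $\mathcal{V}$. With these two corrections (the divergence-form equation and the $r$-scaled transport), the rest of your outline --- $s_{r,f,u}\leq\e^{-2b/n}$, the Jacobian estimate as in Theorem \ref{thm:ABP}, and the final limit --- coincides with the paper's proof.
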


\begin{rem}
The quantity $\mathcal{V}$ does not depend on the choice of the base point.
Also,
in the unweighted case,
the existence of the limit is guaranteed by the Bishop-Gromov comparison,
which is called the \textit{asymptotic volume ratio}.
\end{rem}

\begin{rem}
Under the same setting as in Theorem \ref{thm:Sobolev},
the finiteness of $\mathcal{V}$ follows from \cite[Theorem 2.7]{WY},
which is a Bishop type estimate under $\Ric^1_f\geq 0$.
\end{rem}

\begin{rem}
Theorem \ref{thm:Sobolev} recovers the original one in \cite{B} by considering the case of $f\equiv b$.
\end{rem}

Let us prove Theorem \ref{thm:Sobolev}.
\begin{proof}[Proof of Theorem \ref{thm:Sobolev}]
We prove the desired statement along the line of the argument of the proof of \cite[Theorem 1.1]{B}.
By rescaling,
we may assume 
\begin{equation}\label{eq:Sobolev-1}
    \int_{\Omega} |\nabla(\e^{\frac{2f}{n}} \varphi)|\ \d m_f + \int_{\partial \Omega} \e^{\frac{2f}{n}} \varphi \ \d m_{f, \partial \Omega} = n \int_{\Omega} \varphi^{\frac{n}{n-1}}\  \d m_f.
\end{equation}
Let $u$ be the solution to the following boundary value problem:
\begin{equation}\label{eq:Sobolev-boundary-problem}
    \begin{cases}
        \mbox{div}\left(\e^{-\left(1-\frac{2}{n}\right)f}\varphi\nabla u \right) = \e^{-f}\left(n\varphi^{\frac{n}{n-1}} - |\nabla (\e^{\frac{2f}{n}}\varphi)|\right) & \mbox{ on } \Omega,\\
         u_\nu = 1 & \mbox{ on } \partial \Omega,
     \end{cases}
\end{equation}
where $\nu$ is the outer unit normal vector field on $\partial \Omega$.
Since $\varphi$ is smooth and the right-hand side of the equation is $C^{0,1}$, 
we have $u \in C^{2,\alpha}(\overline{\Omega})$ for any $\alpha \in (0,1)$ by the standard elliptic regularity theory (see e.g., \cite[Theorem 6.30]{GT}).
We set
\begin{equation*}
U := \{ x \in \Omega\, \mid\,  |(\nabla u)_x| < 1\}.
\end{equation*}
Notice that
on $U$, 
\begin{equation}\label{eq:Sobolev-10}
    \e^{\frac{2f}{n}} \varphi \Delta_f u = n \varphi^{\frac{n}{n-1}} - \langle \nabla (\e^{\frac{2f}{n}}\varphi),\nabla u \rangle - |\nabla (\e^{\frac{2f}{n}}\varphi)| \leq n\varphi^{\frac{n}{n-1}}
\end{equation}
by \eqref{eq:Sobolev-boundary-problem} and the Cauchy-Schwarz inequality.

For $r>0$,
let $A_r$ stand for the set of all $x\in U$ such that
\begin{equation*}
r\,u(y)+\frac{1}{2}d(y,\exp_x(r(\nabla u)_x))^2\geq r\,u(x)+\frac{r^2}{2}|(\nabla u)_x|^2
\end{equation*}
for all $y\in \overline{\Omega}$.
We also define a map $\Phi_r:\overline{\Omega}\to M$ by
\begin{equation*}
\Phi_r(x):=\exp_x(r(\nabla u)_x).
\end{equation*}
Furthermore,
let $E_r$ denote the set of all $x\in M$ such that $d(x,y) < r$ for all $y\in \overline{\Omega}$.
Due to \cite[Lemma 2.2]{B},
$E_r$ is contained in $\Phi_r(A_r)$.
Hence,
by the same calculation as in the proof of Theorem \ref{thm:ABP},
and by \eqref{eq:Sobolev-10},
we obtain
\begin{align*}
m_f(E_r)&\leq  \int_{A_r}\,\e^{-f(\Phi_{r}(x))} |\det (d \Phi_{r})_x)|\ \d v_g(x)\\
             &\leq  \int_{A_r} \left( 1 + \frac{r}{n}\,  \e^{\frac{2f}{n}}s_{r, f, u} \,\Delta_f u\right)^n \ \d m_f \leq  \int_{U} \left(1 + r\,\e^{-\frac{2b}{n}} \varphi^{\frac{1}{n-1}}\right)^n \ \d m_f,
\end{align*}
where $s_{r,f,u}:\overline{\Omega}\to \mathbb{R}$ is a function defined by
\begin{equation*}\label{def:h-s}
    s_{r,f,u}(x) :=  \int_0^1 \exp\left\{ -\frac{2}{n} f\left(\exp_x\left( \xi \,r\,(\nabla u)_x \right)\right) \right\} \d\xi.
\end{equation*}
Dividing both sides by $r^{n}$ and letting $r \to +\infty$, 
we obtain
\begin{equation*}
 \mathcal{V}  =\limsup_{r \rightarrow +\infty} \frac{m_{f}(E_r)}{r^{n}}\leq \e^{-2b} \int_U \varphi^{\frac{n}{n-1}}  \ \d m_f \leq \e^{-2b} \int_{\Omega} \varphi^{\frac{n}{n-1}} \ \d m_f.
\end{equation*}
This together with \eqref{eq:Sobolev-1} implies
\begin{align*}
    \int_{\Omega} |\nabla (\e^{\frac{2f}{n}} \varphi)| \ \d m_f+\int_{\partial \Omega} \e^{\frac{2f}{n}} \varphi\ \d m_{f,\partial\Omega}
    &= n\left(\int_{\Omega}  \varphi^{\frac{n}{n-1}} \ \d m_f \right)^{\frac{1}{n}}\left(\int_{\Omega}  \varphi^{\frac{n}{n-1}} \ \d m_f \right)^{\frac{n-1}{n}}  \\
    &\geq n  \e^{\frac{2b}{n}} \mathcal{V}^{\frac{1}{n}}\left(\int_{\Omega}  \varphi^{\frac{n}{n-1}}\ \d m_f \right)^{\frac{n-1}{n}}.
\end{align*}
We complete the proof.
\end{proof}

By applying Theorem \ref{thm:Sobolev} to $\varphi := \e^{-\frac{2f}{n}}$,
we obtain the following isoperimetric inequality (cf. \cite[Corollary 1.3]{B}, and also \cite{APPS}, \cite{BK}, \cite{CM1}, \cite{CM2} in the non-smooth framework):
\begin{cor}\label{cor:isoperimetric}
Let $(M,g,f)$ be an $n$-dimensional complete weighted Riemannian manifold, and let $\Omega$ be a bounded domain in $M$ with smooth boundary. 
We assume $\Ric_f^0 \geq 0$ and $f \geq b$ for some constant $b\in \mathbb{R}$.
Then we have 
\begin{equation*}
    m_{f,\partial \Omega}(\partial \Omega) \geq n\e^{\frac{2b}{n}}\mathcal{V}^{\frac{1}{n}}\left(m_{\left(1 + \frac{2}{n-1}\right)f}(\Omega)\right)^{\frac{n-1}{n}}.
\end{equation*}
\end{cor}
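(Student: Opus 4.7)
The plan is to apply Theorem \ref{thm:Sobolev} directly to the specific test function $\varphi := \e^{-\frac{2f}{n}}$, which is positive and belongs to $C^{\infty}(\overline{\Omega})$ since $f$ is smooth, hence a legitimate choice in that theorem.

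With this choice, the key observation is that $\e^{\frac{2f}{n}}\varphi \equiv 1$ on $\overline{\Omega}$. Therefore the gradient term on the left-hand side of the Sobolev inequality vanishes identically, namely
\begin{equation*}
\int_{\Omega} |\nabla(\e^{\frac{2f}{n}} \varphi)| \ \d m_f = 0,
\end{equation*}
while the boundary term reduces to
\begin{equation*}
\int_{\partial \Omega}\e^{\frac{2f}{n}} \varphi\ \d m_{f,\partial \Omega} = \int_{\partial \Omega} 1 \ \d m_{f,\partial \Omega} = m_{f,\partial\Omega}(\partial \Omega).
\end{equation*}

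For the right-hand side, I would compute $\varphi^{\frac{n}{n-1}} = \e^{-\frac{2f}{n-1}}$, so that combining with the density $\e^{-f}$ of $m_f$ yields
\begin{equation*}
\int_\Omega \varphi^{\frac{n}{n-1}} \ \d m_f = \int_\Omega \e^{-\left(1+\frac{2}{n-1}\right)f} \ \d v_g = m_{\left(1+\frac{2}{n-1}\right)f}(\Omega).
\end{equation*}
Substituting these three identities into the Sobolev inequality of Theorem \ref{thm:Sobolev} yields the desired isoperimetric inequality. There is no real obstacle here; the only thing to check is that the completeness of $M$, the assumption $\Ric_f^0 \geq 0$ and $f\geq b$, and the smoothness of $\partial\Omega$ required by Theorem \ref{thm:Sobolev} are all inherited from the hypotheses of the corollary, which they are.
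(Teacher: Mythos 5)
Your proposal is correct and is exactly the paper's argument: the paper also obtains the corollary by applying Theorem \ref{thm:Sobolev} to $\varphi := \e^{-\frac{2f}{n}}$, with the same identifications of the gradient, boundary, and volume terms. Nothing further is needed.
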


\subsection*{{\rm{Acknowledgements}}}

The authors express their gratitude to Professor Shin-ichi Ohta for valuable comments.
The authors thank the anonymous referee for useful comments.
A portion of this work was written while the first named author was at Department of Mathematics, 
Osaka University.
The first named author was supported by JSPS KAKENHI (25KJ0271) and JST, 
the establishment of university fellowships towards the creation of science technology innovation (JPMJFS2125).
The second named author was supported by JSPS KAKENHI (22H04942, 23K12967).

\end{document}